\def\me{\mathsf{e}}
\def\mv{\mathsf{v}}
\def\:{\thinspace:\thinspace}
\newtheorem{theo}{Theorem}
\newtheorem{lemma}[theo]{Lemma}
\newtheorem{cor}[theo]{Corollary}
\newtheorem{defi}[theo]{Definition}
\newtheorem{rem}[theo]{Remark}
\numberwithin{equation}{section}
\numberwithin{theo}{section}
\theoremstyle{definition}
\numberwithin{theo}{section}
\newcommand{\R}{{\mathbb R}}
\newcommand{\N}{{\mathbb N}}
\newcommand{\Z}{{\mathbb Z}}
\DeclareMathOperator{\dist}{dist}
\DeclareMathOperator{\grad}{grad}
\DeclareMathOperator{\diver}{div}
  \def\mG{\mathsf{G}}
  \def\mV{\mathsf{V}}
  \def\mE{\mathsf{E}}
  \def\Lfun{\mathscr{L}}
\def\mv{\mathsf{v}}
 \def\me{\mathsf{e}}
 \def\mw{\mathsf{w}}
  \def\mW{\mathsf{W}}
  \def\mf{\mathsf{f}}
\title[Regularity and long-time behavior of $p$-Laplace equations]{Time regularity and long-time behavior of parabolic $p$-Laplace equations on infinite graphs}
 \author{Bobo Hua}
\address{Bobo Hua, School of Mathematical Sciences, LMNS, Fudan University, 200433 Shanghai, China}
\email{bobohua@fudan.edu.cn}
\author{Delio Mugnolo}
\address{Delio Mugnolo, Lehrgebiet Analysis, FernUniversität in Hagen, 58084 Hagen, Germany}
\email{delio.mugnolo@fernuni-hagen.de}
\subjclass[2010]{39A12, 47H20, 05C50}
\keywords{Nonlinear semigroups generated by subdifferentials, Operators on graphs, Parabolic equations with finite extinction time}
\thanks{This research has been partially supported by the Land Baden--Württemberg in the framework of the \emph{Juniorprofessorenprogramm} -- research project on ``Symmetry methods in quantum graphs'' -- and by the Center for Interdisciplinary Research (ZiF) in Bielefeld in the framework of the cooperation group on ``Discrete and continuous models in the theory of networks".}
\begin{document}

\begin{abstract}
We consider the so-called \emph{discrete $p$-Laplacian}, a nonlinear difference operator that acts on functions defined on the nodes of a possibly infinite graph. We study the associated nonlinear Cauchy problem and identify the generator of the associated nonlinear semigroups. We prove higher order time regularity of the solutions. We investigate the long-time behavior of the solutions and discuss in particular finite extinction time and conservation of mass. Namely, on one hand, for small $p$ if an infinite graph satisfies some isoperimetric inequality, then the solution to the parabolic $p$-Laplace equation vanishes in finite time; on the other hand, for large $p,$ these parabolic $p$-Laplace equations always enjoy conservation of mass.
\end{abstract}

\maketitle

\section{Introduction}\label{sec:1}

The $p$-Laplacian
\begin{equation}\label{form:plapl}
u\mapsto \Delta_p u:=-\mathcal \diver \left(|\grad u|^{p-2}\mathcal \grad u\right),\qquad p\in [1,\infty)\ ,
\end{equation}
is a well-known operator in the theory of nonlinear partial differential equations. It is commonly applied to functions $u$ supported on domains of $\mathbb R^d$ but following~\cite{Tak91} in recent years it has been increasingly investigated in the context of spectral geometry on Riemannian manifolds, too.

Throughout this paper we consider instead finite or countable, simple, connected, oriented, weighted graphs $\mG=(\mV,\mE,\nu,\mu)$, where $\mu(\me),\nu(\mv)$ denote the weights assigned to the edge $\me\in \mE$ and to the node $\mv\in \mV$, respectively. Indeed, it is also possible to define a discrete version of the $p$-Laplacian that acts on functions which are defined on the node set $\mV$ of $\mG$: Following the standard ansatz of finite difference calculus, a \emph{discrete $p$-Laplacian} then arises as the divergence operator is replaced by the incidence matrix $\mathcal I$ of the graph, and accordingly the gradient by the transpose of $\mathcal I$. This leads to
\begin{equation}\label{eq:defin}
f\mapsto \Lfun_p f:=\mathcal I \left(|\mathcal I^T f|^{p-2}\mathcal I^T f\right)\ ,
\end{equation}
or rather
\begin{equation}\label{eq:defin-2}
f\mapsto \Lfun_p f:=\frac{1}{\nu}\mathcal I \Big(\mu |\mathcal I^T f|^{p-2}\mathcal I^T f\Big)\ ,
\end{equation}
in the case of nontrivial weights $\mu,\nu$. These expressions are well-defined whenever $\mG$ is a finite graph but need to be refined for infinite ones, cf.\ Section~\ref{sec:2}.  We stress that the operator $\Lfun_p$ introduced by means of the oriented incidence matrix $\mathcal I$ is actually independent of the orientation.

The discrete $p$-Laplacian was introduced in~\cite{NakYam76} and has been well studied ever since, mostly in the context of nonlinear potential and spectral theory, cf.~\cite{BerJebSer13,IanTer13,Mug13} for an historical overview. Its parabolic properties have been investigated less extensively. In the case of graphs on a finite node set $\mV$ the \emph{parabolic $p$-Laplace equation}, i.e., the evolution equation
\begin{equation}\label{i:eq-parabolic}
\frac{d }{dt}u(t,\mv)+\Lfun_pu(t,\mv)=0,\qquad t\ge 0,\ \mv\in \mV\ ,
\end{equation}
is but a finite dimensional nonlinear dynamical system whose associated Cauchy problem has a unique local solution by the Picard--Lindelöf theorem. In the continuous case, the corresponding parabolic problem
\begin{equation}\label{i:eq-parabolic-omega}
\frac{\partial }{\partial t}u(t,x)+\Delta_pu(t,x)=0,\qquad t\ge 0,\ x\in \Omega\ ,
\end{equation}
on domains $\Omega$ of $\mathbb R^d$ has a long history that goes back to seminal investigations by Ladyzenskaya and others in the 1960s. For $d=1$ it is known that~\eqref{i:eq-parabolic-omega} is nothing but an equivalent formulation of the more popular porous medium equation.

The spaces $\ell^p_{\nu}(\mV)$ (resp., $\ell^p_{\mu}(\mE)$), $p\in[1,\infty]$, are routinely defined for functions supported on $\mV$ with respect to the measure $\nu$ (resp., on $\mE$ with respect to the measure $\mu$). By means of these ``discrete'' Lebesgue spaces, and in view of the analogy between $\grad$ and $\mathcal I^T$ also discrete analogs of Sobolev spaces can eventually be defined, which we denote by  $w^{1,p,2}_{\mu,\nu}(\mV)$.

Well-posedness of the Cauchy problem associated with the parabolic equation~\eqref{i:eq-parabolic} on finite graphs can be refined to also yield a unique global solution for a large class of infinite graphs in a Hilbert space setting: This has been proven by the second author in~\cite{Mug13} and for the reader's convenience we collect some related results in Theorem~\ref{thm:main-new}.

We will state our main results below and postpone their proofs to the main body of the article.

\begin{itemize}
\item \textbf{The generators.} One interesting feature of the theory of linear differential equations on \emph{infinite} graphs is that difference operators, like the $p$-Laplacians in our present setting, may have different self-adjoint extensions: Roughly speaking, in spite of the discrete structure of the environment, ``boundary conditions" (at infinity) may be needed in order to determine a solution uniquely, cf.\ Remark~\ref{rem:car11}.
Two realizations $\Lfun^{\rm D}_p$ and $\Lfun^{\rm N}_p$ of the $p$-Laplacians turn out to be particularly natural.
They are introduced in Definitions~\ref{defi->thm:main} and~\ref{defi-thm:mainnu} and following~\cite{HaeKelLen12} we refer to them as \emph{$p$-Laplacians with Dirichlet} and \emph{Neumann conditions}, respectively. By construction, they coincide whenever applied to functions with finite support, but $\Lfun^{\rm D}_p, \Lfun^{\rm N}_p$ may differ even for $p=2$ on infinite graphs. The associated parabolic $p$-Laplace equations are denoted by \eqref{d:Dirichlet} and \eqref{d:Neumann}, accordingly, and they are governed by strongly continuous semigroups of nonlinear contractions, respectively.

\begin{theo}\label{t:generator}
Let $q\in [1,\infty]$ and $f_0\in \ell^q_\nu(\mV)\cap \ell^2_\nu(\mV)$.  Then $\varphi:t\mapsto e^{-t\Lfun^{\rm N}_p}f_0$ satisfies
\[
\frac{d\varphi}{dt}(t,\mv)=-\mathcal I\left((|\mathcal I^T\varphi(t,\cdot)|^{p-2})\mathcal I^T\varphi(t,\cdot)\right)(\mv),\qquad t\ge 0,\ \mv\in \mV\ ,
\]
and in particular the generator of $(e^{-t\Lfun^{\rm N}_{p}})_{t\ge 0}$ formally acts as a $p$-Laplacian whenever applied to functions with finite support. An analogous assertion holds for the semigroup $(e^{-t\Lfun^{\rm D}_p})_{t\ge 0}$.
\end{theo}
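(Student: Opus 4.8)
The plan is to combine the abstract regularity theory for gradient flows of convex functionals with an explicit nodewise identification of the subdifferential. Recall that, by Definition~\ref{defi-thm:mainnu} (resp.\ Definition~\ref{defi->thm:main}), the operator $\Lfun^{\rm N}_p$ (resp.\ $\Lfun^{\rm D}_p$) is the subdifferential in the Hilbert space $\ell^2_\nu(\mV)$ of the proper, convex, lower semicontinuous energy $\Efun^{\rm N}_p$ (resp.\ of its relaxation $\Efun^{\rm D}_p$, whose form core consists of the finitely supported functions), and that the corresponding semigroups of nonlinear contractions govern \eqref{d:Neumann} and \eqref{d:Dirichlet}. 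Since (using the standing assumptions on $\mG$) the finitely supported functions lie in $D(\Efun^{\rm N}_p)$ and are dense in $\ell^2_\nu(\mV)$, the closure of $D(\Efun^{\rm N}_p)$ is all of $\ell^2_\nu(\mV)$, so $\varphi(t)=e^{-t\Lfun^{\rm N}_p}f_0$ is well defined for every $f_0\in\ell^2_\nu(\mV)$, in particular for $f_0\in\ell^q_\nu(\mV)\cap\ell^2_\nu(\mV)$; the only role of $q$ is that, via Theorem~\ref{thm:main-new}, the orbit then remains in $\ell^q_\nu(\mV)$, and it plays no further part below. I would then invoke the smoothing effect of subgradient flows (K\=omura, Br\'ezis; cf.\ also Theorem~\ref{thm:main-new} and \cite{Mug13}): for every $t>0$ one has $\varphi(t)\in D(\Lfun^{\rm N}_p)$, the map $t\mapsto\varphi(t)$ is locally Lipschitz from $(0,\infty)$ into $\ell^2_\nu(\mV)$, it is right-differentiable at every $t>0$ with
\[
\frac{d^+\varphi}{dt}(t)=-\big(\Lfun^{\rm N}_p\varphi(t)\big)^{\circ},
\]
where $(\cdot)^{\circ}$ selects the element of minimal $\ell^2_\nu$-norm of the (a priori multivalued) operator, it is two-sidedly differentiable off an at most countable set of times, and, if $f_0\in D(\Lfun^{\rm N}_p)$, all of this extends up to $t=0$ and the orbit is Lipschitz on $[0,\infty)$.

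The second and main step is to identify $\big(\Lfun^{\rm N}_p g\big)^{\circ}$, for $g\in D(\Lfun^{\rm N}_p)$, with the formal $p$-Laplacian \eqref{eq:defin-2} evaluated at $g$ (which reduces to the expression in the statement when $\mu\equiv\nu\equiv 1$). For $p\in(1,\infty)$ the functional $f\mapsto\tfrac1p\|\mathcal I^Tf\|^p_{\ell^p_\mu(\mE)}$ is Gateaux differentiable on its effective domain, so $\Lfun^{\rm N}_p$ is single-valued and $\Lfun^{\rm N}_p g$ is characterised by
\[
\langle\Lfun^{\rm N}_p g,h\rangle_{\ell^2_\nu(\mV)}=\sum_{\me\in\mE}\mu(\me)\,|(\mathcal I^Tg)(\me)|^{p-2}(\mathcal I^Tg)(\me)\,(\mathcal I^Th)(\me)
\]
for all admissible $h$. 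I would test this against finitely supported $h$, turning the right-hand side into a \emph{finite} sum, and apply the summation-by-parts identity relating $\mathcal I^T$ and $\mathcal I$ to rewrite it as $\sum_{\mv}\nu(\mv)\,h(\mv)\,\big(\tfrac1\nu\mathcal I(\mu|\mathcal I^Tg|^{p-2}\mathcal I^Tg)\big)(\mv)$; density of the finitely supported functions in $\ell^2_\nu(\mV)$ then forces $\Lfun^{\rm N}_p g$ to coincide nodewise with the expression in \eqref{eq:defin-2}. For this to be meaningful one checks that, for each fixed $\mv$, the nodal series $\big(\mathcal I(\mu|\mathcal I^Tg|^{p-2}\mathcal I^Tg)\big)(\mv)$ converges absolutely — this follows from H\"older's inequality together with $\mathcal I^Tg\in\ell^p_\mu(\mE)$ and the bound $\sum_{\me\ni\mv}\mu(\me)<\infty$ from the standing assumptions on $\mG$ — so that no boundary contribution at infinity is lost.

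Putting the two steps together yields the theorem. For $t>0$, evaluation at a node $\mv$ being a bounded linear functional on $\ell^2_\nu(\mV)$, the $\ell^2_\nu$-valued right derivative of $\varphi$ descends to a pointwise right derivative, whence $\tfrac{d\varphi}{dt}(t,\mv)=-\big(\tfrac1\nu\mathcal I(\mu|\mathcal I^T\varphi(t,\cdot)|^{p-2}\mathcal I^T\varphi(t,\cdot))\big)(\mv)$ for all $\mv\in\mV$ (a genuine, two-sided derivative off a countable set of times, and everywhere on $[0,\infty)$ when $f_0\in D(\Lfun^{\rm N}_p)$), which is the asserted equation; specialising the identification of the generator $-\Lfun^{\rm N}_p=-\partial\Efun^{\rm N}_p$ to finitely supported $g$ recovers exactly the formal $p$-Laplacian. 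The Dirichlet case is verbatim the same, since the variational characterisation of $\Lfun^{\rm D}_p=\partial\Efun^{\rm D}_p$ again involves only test functions $h$ ranging over the form core, which may be taken to be the finitely supported functions, so the summation-by-parts computation is unchanged; and the case $p=1$ fits the same scheme upon reading $|\cdot|^{p-2}(\cdot)$ as the set-valued $\sign$ on the edges where $\mathcal I^Tg$ vanishes and taking the minimal section on the right-hand side. I expect the delicate point to be precisely this last identification — converting the abstract, variationally defined subdifferential element into the explicit pointwise sum, which requires both the nodewise absolute convergence and the legitimacy of summation by parts against a dense class of test functions, and is where the structural hypotheses on the weighted graph $\mG$ are genuinely used.
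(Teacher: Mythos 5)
Your argument is correct for the statement as literally formulated, but it follows a genuinely different route from the paper's. You stay entirely inside the Hilbert space $\ell^2_\nu(\mV)$: you invoke the Br\'ezis--K\=omura regularity of the subgradient flow (which is what Theorem~\ref{thm:main-new}(1) records), and then identify the variationally defined operator $\Lfun^{\rm N}_p g$ nodewise with the formal expression \eqref{e:p laplacian} by testing against $\delta_\mv\in w^{1,p,2}_{\mu,\nu}(\mV)$ and summing by parts, with the absolute convergence of the nodal sums secured by H\"older and local finiteness. This is clean and makes explicit a step the paper leaves implicit (the paper simply asserts that $t\mapsto e^{-t\Lfun^{\rm N}_p}f_n$ ``satisfies the wished differential equation'' by definition of the semigroup). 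The paper instead argues by exhaustion: it truncates $f$ to finitely supported $f_n=f\cdot\mathbf 1_n$, uses order preservation of the semigroup to get a monotone sequence of orbits, passes to the limit in $\ell^q_\nu$ by Beppo Levi (resp.\ dominated convergence for general sign), and transports the pointwise ODE to the limit node by node. The payoff of the paper's heavier route is that it identifies the generator of the \emph{extrapolated} semigroup $(e^{-t\Lfun^{\rm N}_{p,q}})_{t\ge 0}$ of Corollary~\ref{ralphwell-local} for data merely in $\ell^q_\nu(\mV)$, which is the motivation announced just before the theorem; your observation that ``$q$ plays no further part'' is accurate for the stated hypothesis $f_0\in\ell^q_\nu\cap\ell^2_\nu$ (where the extrapolated and the $\ell^2$ semigroups coincide by construction), but your argument would not reach data outside $\ell^2_\nu$. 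Two minor points: the claim of the displayed equation for all $t\ge 0$ (rather than $t>0$, as a right derivative) does require $f_0\in D(\Lfun^{\rm N}_p)$ or an approximation step, as you note; and your closing remark about $p=1$ is outside the scope of the paper, which restricts throughout to $p\in(1,\infty)$ precisely because the $1$-Laplacian is multivalued.
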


\item \textbf{Time regularity of solutions} Much attention has devoted to the issue of regularity of the solutions to the parabolic $p$-Laplace equation on domains $\Omega$ of $\mathbb R^d$. Galerkin's method yields that the solutions belong to $W^{1,2}(0,\infty; L^2(\Omega))\cap L^{\infty}(0,\infty; W^{1,p}(\Omega))$ and the theory of nonlinear semigroups only ventures into additionally saying that the solution instantaneously belongs to the domain of $\Delta_p$, cf.~\cite[Cap.~3]{Bre73}. Analog assertions hold for the discrete parabolic $p$-Laplace equation.
To the best of our knowledge the issue of higher order time regularity is largely open for general $p\ne 2$, both in the continuous and -- so far -- in the discrete case. This is in striking contrast to the linear case, as the $2$-Laplacian is nothing but the usual (linear) Laplacian, hence the solution to the parabolic problem
\begin{equation}\label{i:eq-parabolic-2}
\frac{d }{dt}u(t,x)+\Delta_2 u(t,x)=0,\qquad t\ge 0,\ x\in \Omega\ ,
\end{equation}
(beware the sign convention in~\eqref{form:plapl}!).
is given by an analytic semigroup on $L^2(\Omega)$: Accordingly, for $p=2$ the solution is analytic in time and, as a consequence of suitable Sobolev embeddings, of class $C^\infty$ in space.

In the case of the parabolic $p$-Laplace equation on graphs, the regularity in the spatial direction is not a natural topic due to the discrete local structure. The issue of time regularity is probably more interesting. We can prove higher order regularity of solutions to~\eqref{i:eq-parabolic} for various $p$.

\begin{theo}\label{resuc2}
 Let $u\in W^{1,2}(0,\infty; \ell^2_\nu(\mV))\cap L^{\infty}(0,\infty; w^{1,p,2}_{\mu,\nu}(\mV))$ be a solution to the parabolic $p$-Laplace equation~\eqref{i:eq-parabolic}, $p\in (1,\infty)$. Then the following assertions are true:  For any $\mv \in \mV,$
\begin{enumerate}[(a)]
\item $u(\cdot,\mv)\in C^{\infty}([0,\infty))$ if $p$ is an even integer,
\item $u(\cdot,\mv)\in C^{p-1,1}([0,\infty))$ if $p$ is an odd integer, and
\item $u(\cdot,\mv)\in C^{\lfloor p\rfloor,p-\lfloor p\rfloor}([0,\infty))$ if $p\in (1,\infty)\setminus\N$.
\end{enumerate}
\end{theo}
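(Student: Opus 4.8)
The plan is to bootstrap in time using the structure of the equation itself. Fix a node $\mv$. The first point is that on a graph, for each fixed $\mv$ the quantity $\Lfun_p u(t,\mv)$ is a \emph{finite} sum (over edges incident to $\mv$, using local finiteness or at least absolute convergence guaranteed by $u(t,\cdot)\in w^{1,p,2}_{\mu,\nu}(\mV)$) of terms of the form $\mu(\me)\,|u(t,\mv)-u(t,\mw)|^{p-2}(u(t,\mv)-u(t,\mw))$. So the equation $\frac{d}{dt}u(t,\mv)=-\Lfun_p u(t,\mv)$ expresses $u(\cdot,\mv)'$ as a finite algebraic combination of the scalar functions $t\mapsto \phi_{\mv\mw}(t):=u(t,\mv)-u(t,\mw)$ composed with the single real function $s\mapsto |s|^{p-2}s =: \psi_p(s)$. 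Thus the whole regularity question reduces to: how smooth is $t\mapsto \psi_p(\phi(t))$ when $\phi$ is a difference of two coordinates of the solution? I would first establish the base regularity $u(\cdot,\mv)\in C^{0,1}$ (indeed $u\in W^{1,2}(0,\infty;\ell^2_\nu)$ gives $u(\cdot,\mv)$ absolutely continuous, and the right-hand side is bounded since $u\in L^\infty(0,\infty;w^{1,p,2}_{\mu,\nu})$, so $u(\cdot,\mv)$ is Lipschitz, hence $C^{0,1}$), and note this Lipschitz regularity is inherited by every difference $\phi_{\mv\mw}$.

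The core is then a chain-rule / composition lemma about $\psi_p$. When $p$ is an even integer, $\psi_p(s)=s^{p-1}$ is a polynomial, so $t\mapsto \psi_p(\phi(t))$ is as smooth as $\phi$; an easy induction on the order of differentiation — differentiate the ODE, observe the right-hand side becomes a polynomial in the $\phi_{\mv\mw}$ and their already-controlled derivatives — yields $u(\cdot,\mv)\in C^\infty$, giving (a). When $p$ is an odd integer, $\psi_p(s)=|s|\,s^{p-2}=\operatorname{sgn}(s)\,|s|^{p-1}$, which is $C^{p-2}$ with $(p-2)$-nd derivative a constant multiple of $|s|$, hence Lipschitz: $\psi_p\in C^{p-2,1}(\R)$. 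Composing a $C^{p-2,1}$ function with the Lipschitz $\phi$, and feeding this through the differentiated ODE $p-1$ times, one gains: $u(\cdot,\mv)'$ is $C^{p-2,1}$ in $t$ once one knows $u(\cdot,\mv)\in C^{p-2,1}$; running the bootstrap shows $u(\cdot,\mv)\in C^{p-1,1}$, which is (b). For non-integer $p\in(1,\infty)$, $\psi_p\in C^{\lfloor p\rfloor -1,\,p-\lfloor p\rfloor}(\R)$ (its $(\lceil p\rceil-2)$-th derivative behaves like $|s|^{p-\lfloor p\rfloor}$ near $0$, which is Hölder of exponent $p-\lfloor p\rfloor$), and the same bootstrap through the ODE gains one order of differentiability, landing at $u(\cdot,\mv)\in C^{\lfloor p\rfloor,\,p-\lfloor p\rfloor}$, which is (c). In each case the induction is: assuming $u(\cdot,\mv)\in C^{k,\alpha}$ (equivalently all $\phi_{\mv\mw}\in C^{k,\alpha}$), differentiate the ODE $k$ times using Faà di Bruno / the chain rule for Hölder functions to see $u(\cdot,\mv)^{(k+1)} = -\frac{d^k}{dt^k}\Lfun_p u(\cdot,\mv)$ is a finite combination of derivatives of $\psi_p\circ\phi_{\mv\mw}$, which — given the known regularity of $\psi_p$ — lies in $C^{?,?}$; reading off the exponents gives the next step, and the process terminates exactly where the intrinsic regularity of $\psi_p$ caps it.

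The main obstacle I anticipate is making the composition/chain-rule estimates clean at the boundary of the regularity class — precisely the points where $\phi_{\mv\mw}(t)=0$, i.e., where $u(t,\mv)=u(t,\mw)$. Away from such $t$, $\psi_p$ is real-analytic and everything is smooth locally; the delicate claim is the uniform Hölder/Lipschitz bound on the top-order derivative of $\psi_p\circ\phi$ across these zeros, which is where the exact exponent $p-\lfloor p\rfloor$ (resp.\ the Lipschitz bound for odd $p$) enters. I would handle this by the elementary pointwise inequality $|\,|s|^{\beta}-|r|^{\beta}\,|\le C|s-r|^{\beta}$ for $\beta\in(0,1]$ together with the Lipschitz (or higher) regularity of $\phi$ already in hand, composing with Faà di Bruno away from zeros and passing to the closure — the composition of a $C^{k,\alpha}$ function with a $C^{k,\beta}$ function being at least $C^{k,\min(\alpha\beta,\ldots)}$; one must just be careful that because $\phi$ is in fact Lipschitz (not merely Hölder) at each inductive stage for the \emph{fresh} factor, the Hölder exponent is not degraded below $p-\lfloor p\rfloor$. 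A secondary, purely bookkeeping, point is to verify that term-by-term differentiation of the possibly infinite sum defining $\Lfun_p u(t,\mv)$ is legitimate — this follows because $u(t,\cdot)\in w^{1,p,2}_{\mu,\nu}(\mV)$ uniformly in $t$ provides the dominating summability, so dominated convergence justifies interchanging $\frac{d}{dt}$ with $\sum_\me$.
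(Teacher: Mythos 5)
Your proposal is correct and follows essentially the same route as the paper: establish $C^1$ regularity of $t\mapsto u(t,\mv)$ from the pointwise ODE, record that $g_p(\alpha)=|\alpha|^{p-2}\alpha$ lies in $C^\infty(\R)$, $C^{p-2,1}(\R)$, or $C^{\lfloor p\rfloor-1,\,p-\lfloor p\rfloor}(\R)$ according to the three cases, and bootstrap through the (locally finite) sum defining $\Lfun_p u(\cdot,\mv)$. The only cosmetic difference is that the paper obtains the initial $C^1$ step by invoking the lemma that a continuous function with continuous right derivative is differentiable (using the everywhere-defined right derivative supplied by the nonlinear semigroup theory), whereas you argue via absolute continuity plus continuity of the right-hand side; both are fine.
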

Here $\lfloor p\rfloor$ denotes the largest integer less than or equal to $p$.

Furthermore, we can show higher regularity in time of the solutions to~\eqref{i:eq-parabolic} even with respect to the $\ell^q$-norms. The following theorem states that we can get $C^1$ regularity of the time for the semigroups associated with the $p$-Laplace operators for $p\geq 2$. Here and in the following we call a weighted graph $\mG=(\mV,\mE,\nu,\mu)$ \emph{uniformly locally finite} if
\begin{equation}\tag{ULF}\label{a:ULF}
\inf_{\mv\in \mV} \frac{\nu(\mv)}{\mu(\mv)}>0\ ,
\end{equation}
where $\mu(\mv):=\sum \mu(\me)$, summing over all edges $\me$ that have $\mv$ as an endpoint, see Definition~\ref{defi:basic}. The measure $\nu$ is said to be \emph{nondegenerate} if
\begin{equation}\tag{ND}\label{a:nondeg}
\inf_{\mv\in \mV} \nu(\mv)>0\ .
\end{equation}

\begin{theo}\label{thm:higher in norm} Let $\mG$ be uniformly locally finite \eqref{a:ULF} and $\nu$ be nondegenerate \eqref{a:nondeg}. Let $p\geq 2$. Then for any $q\in [1,\infty)$ and any initial data $u_0\in \ell^q_{\nu}(\mV)$ there exists a unique solution $u\in C^1([0,\infty),\ell^q_{\nu}(\mV))$ to the parabolic $p$-Laplace equation.
Furthermore,
\begin{enumerate}[(a)]
\item $u\in C^{\infty}(0,\infty;\ell^q_{\nu}(\mV))$ if $p$ is an even integer,
\item $u\in C^{p-1,1}(0,\infty;\ell^q_{\nu}(\mV))$ if $p$ is an odd integer, and
\item $u\in C^{\lfloor p\rfloor}(0,\infty;\ell^q_{\nu}(\mV))$ if $p\in (2,\infty)\setminus\N.$
\end{enumerate}
\end{theo}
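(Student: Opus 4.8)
The guiding idea is that under \eqref{a:ULF} and \eqref{a:nondeg} the operator $\Lfun_p$ loses its unboundedness for $p\ge2$ and becomes an everywhere-defined, bounded-on-bounded-sets, locally Lipschitz map of $\ell^q_\nu(\mV)$ into itself, so that the problem reduces to the theory of ordinary differential equations in a Banach space plus a careful tracking of pointwise regularity. Write $\Phi(s):=|s|^{p-2}s$, so that $\Lfun_p f=\tfrac1\nu\mathcal I\bigl(\mu\,(\Phi\circ\mathcal I^T f)\bigr)$. First I would record the elementary consequences of the hypotheses: with $c:=\inf_\mv\nu(\mv)/\mu(\mv)>0$, relation \eqref{a:ULF} gives $\sum_\me\mu(\me)|(\mathcal I^T f)(\me)|^s\le 2^{s-1}\sum_\mv\mu(\mv)|f(\mv)|^s\le 2^{s-1}c^{-1}\|f\|_{\ell^s_\nu}^s$ for $s\ge1$, and the analogous bound for $\tfrac1\nu\mathcal I(\mu\,\cdot\,)$ on $\ell^s_\mu(\mE)$ (by Jensen's inequality with weights $\mu(\me)/\mu(\mv)$), while \eqref{a:nondeg} gives $\ell^q_\nu(\mV)\hookrightarrow\ell^\infty(\mV)$ and $\ell^q_\nu(\mV)\hookrightarrow\ell^{q'}_\nu(\mV)$ whenever $q'\ge q$. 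Since $p\ge2$, the $\ell^\infty$-bound lets one estimate $|\Phi(\mathcal I^T f(\me))|\le(2\|f\|_\infty)^{p-2}|\mathcal I^T f(\me)|$, whence a direct computation yields $\|\Lfun_p f\|_{\ell^q_\nu}\lesssim\|f\|_{\ell^q_\nu}^{p-1}$, and the same device (using that $s\mapsto|s|^{p-2}s$ is Lipschitz on bounded intervals) shows that $\Lfun_p$ is locally Lipschitz from $\ell^q_\nu(\mV)$ to itself. The Picard--Lindel\"of theorem then produces, for every $u_0\in\ell^q_\nu(\mV)$, a unique local $C^1$ solution; testing the equation against $|u|^{q-2}u$, integrating by parts, and using the monotonicity of $s\mapsto|s|^{q-2}s$ gives $\tfrac{d}{dt}\|u(t)\|_{\ell^q_\nu}^q\le0$, which excludes blow-up, so $u\in C^1([0,\infty);\ell^q_\nu(\mV))$; on the dense subspace $\ell^q_\nu(\mV)\cap\ell^2_\nu(\mV)$ this coincides, by uniqueness, with the solution furnished by Theorem~\ref{thm:main-new}.

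For the higher regularity I would run a bootstrap on the identity $u'(t)=-\Lfun_p(u(t))$. The first ingredient is the sharp regularity of the scalar nonlinearity $\Phi$ on bounded intervals, read off from $\Phi^{(k)}(s)\sim|s|^{p-1-k}$ near $s=0$: $\Phi$ is a polynomial of degree $p-1$ when $p$ is an even integer; $\Phi\in C^{p-2,1}(\R)$ when $p$ is an odd integer; and $\Phi\in C^{\lfloor p\rfloor-1,\,p-\lfloor p\rfloor}(\R)$ when $p\in(2,\infty)\setminus\N$. The second ingredient is that $\Lfun_p=\tfrac1\nu\mathcal I(\mu\,\cdot\,)\circ(\Phi\circ\,\cdot\,)\circ\mathcal I^T$ inherits this differentiability class on bounded subsets of $\ell^q_\nu(\mV)$: one checks Fr\'echet differentiability order by order through the pointwise Taylor estimate $\bigl|\Phi(a+k)-\sum_{j<n}\tfrac1{j!}\Phi^{(j)}(a)k^j\bigr|\lesssim|k|^{\min(n,\,p-1)}$, uniform in $a$ on bounded sets, and observes that, since $\min(n,p-1)\ge1$, the corresponding remainder for $\Lfun_p$ is pointwise of size $|\mathcal I^T h|^{\min(n,p-1)}$, whose $\ell^q_\nu$-norm is controlled by a power of $\|\mathcal I^T h\|_{\ell^{q'}_\mu}$ with $q'\ge q$ and hence — using \eqref{a:ULF} to bound $\mathcal I^T$ on $\ell^{q'}_\nu$ and \eqref{a:nondeg} via $\ell^q_\nu\hookrightarrow\ell^{q'}_\nu$ — by a superlinear power of $\|h\|_{\ell^q_\nu}$; the H\"older/Lipschitz modulus of the top-order derivative transfers in the same manner, by a generalized H\"older inequality that again lands every exponent above $q$. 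Composing with the bounded linear maps $\tfrac1\nu\mathcal I(\mu\,\cdot\,)$ and $\mathcal I^T$ then shows that $\Lfun_p$ is, respectively, $C^\infty$, $C^{p-2,1}$, or $C^{\lfloor p\rfloor-1,\,p-\lfloor p\rfloor}$ as a map of $\ell^q_\nu(\mV)$ into itself on bounded sets.

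It remains to observe that, if $\Lfun_p\in C^{m,\alpha}$ on bounded sets and $u\in C^1([0,\infty);\ell^q_\nu(\mV))$ solves $u'=-\Lfun_p\circ u$, then $u\in C^{m+1,\alpha}$: differentiating the identity $u'=-\Lfun_p\circ u$ repeatedly raises the known order of differentiability of $u$ by one at each pass until it stabilizes at $m+1$, and the H\"older exponent $\alpha$ survives because, in the Fa\`a di Bruno expansion of $(\Lfun_p\circ u)^{(m)}$, the only term involving $D^m\Lfun_p$ is $D^m\Lfun_p(u)[u',\dots,u']$ — the composition of an $\alpha$-H\"older map with a Lipschitz one — all remaining terms being locally Lipschitz in $t$. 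Inserting the three cases of the previous paragraph yields precisely (a), (b) and (c); in fact the conclusion holds on all of $[0,\infty)$, the open interval $(0,\infty)$ in the statement being only a convenience since the solution stays in the fixed ball of radius $\|u_0\|_{\ell^q_\nu}$ on which $\Lfun_p$ has uniform regularity.

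I expect the main obstacle to be the order-by-order Fr\'echet differentiability of the superposition map between the sequence spaces and the bookkeeping of Lebesgue exponents it entails: differentiability of superposition operators on $L^p$-type spaces is notoriously delicate, and although \eqref{a:nondeg} rescues the situation — functions of $\ell^q_\nu(\mV)$ are uniformly bounded, and $\ell^q_\nu\hookrightarrow\ell^{q'}_\nu$ for $q'\ge q$ absorbs every downward shift of the exponent produced by the powers of $\Phi$ — one must still pin down with care exactly which derivative of $\Phi$ first fails to be continuous, and with what modulus, in the odd-integer and non-integer cases, since this is what produces the sharp exponents underlying (b) and (c). The global a priori bound and the identification with the $\ell^2$-semigroup of Theorem~\ref{thm:main-new} are, by contrast, routine.
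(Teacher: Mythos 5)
Your argument is correct in substance but follows a genuinely different route from the paper's. You recast the problem as an abstract ODE in $\ell^q_\nu(\mV)$: under \eqref{a:ULF} and \eqref{a:nondeg} with $p\ge 2$ the operator $\Lfun_p$ is an everywhere-defined, locally Lipschitz map of $\ell^q_\nu(\mV)$ into itself, so Picard--Lindel\"of plus the $\ell^q$-decay estimate gives global existence and uniqueness directly, and the higher regularity comes from Fr\'echet differentiability of the superposition operator $f\mapsto\Phi(\mathcal I^Tf)$ followed by a chain-rule bootstrap. The paper instead imports existence and uniqueness from the extrapolation theory of Corollary~\ref{ralphwell-local} and Theorem~\ref{thm:main-new}, upgrades to $C^1$ in norm via the pointwise regularity of Theorem~\ref{resuc2} together with Lemmas~\ref{l:monotone} and~\ref{l:normalized operator}, and for the higher derivatives never differentiates $\Lfun_p$ as a Banach-space map at all: it applies Fa\`a di Bruno \emph{pointwise} to $g_p(h_{\mv\mw}(t))$, controls all lower-order derivatives uniformly in $\ell^\infty_\nu$ via \eqref{e:higher u_t infty}, and then sums to obtain $\ell^q_\nu$-bounds on each $\frac{d^k}{dt^k}u(t,\cdot)$ by induction. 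Your approach is more self-contained and, where it succeeds, slightly stronger: the key point making the notoriously delicate Fr\'echet differentiability of the superposition operator work here is exactly the one you identify, namely that \eqref{a:nondeg} gives $\ell^q_\nu(\mV)\hookrightarrow\ell^\infty(\mV)$, so in the multilinear candidate derivative $\Phi^{(n)}(\mathcal I^Tf)\prod_i\mathcal I^Th_i$ one factor can be placed in $\ell^q_\mu(\mE)$ and all others in $\ell^\infty(\mE)$, and the Taylor remainders land in superlinear powers of $\|h\|_{\ell^q_\nu}$. Two points deserve care if you write this out: the norm-decay computation with the test function $|u|^{q-2}u$ needs a regularization for $q\in[1,2)$ (the paper sidesteps this by quoting \cite[Cor.~3.11]{Mug13}); and in case (c) you actually claim $u\in C^{\lfloor p\rfloor,\,p-\lfloor p\rfloor}$, which is stronger than the stated $C^{\lfloor p\rfloor}$ and than what the paper proves (the authors explicitly remark on this drop of the H\"older modulus in norm) --- your transfer of the H\"older exponent through the Fa\`a di Bruno expansion looks plausible, but it is an extra claim you would need to verify in full, and it is not needed for the theorem as stated.
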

Let us remark the drop in regularity between Theorem~\ref{resuc2} and Theorem~\ref{thm:higher in norm}, if $p$ is a non-integer real number.

Theorem~\ref{thm:higher in norm} is somewhat surprising when compared with known results on regularity of the parabolic $p$-Laplace equations on domains of $\mathbb R^d$: Some Hölder regularity results have been obtained in suitably adapted settings among others in~\cite{DibGiaVes12,Hwa12}, but otherwise little seems to be known. As we will see, the enhanced regularity of solutions stated above actually follows from the discrete nature of graphs.

\item \textbf{Long-time behavior of solutions.} A most remarkable property of the parabolic $p$-Laplace equation on a domain $\Omega\subset\R^n$ is that for $1<p<\frac{2n}{n+1}$ the solutions to its associated Cauchy problem can vanish in finite time (and then, again by usual variational arguments and a parabolic maximum principle, they cannot grow again any more): This phenomenon usually referred in the literature to as ``finite extinction time'' sharply contrasts with the behavior in the linear case of $p=2$. In the case of the continuous $p$-Laplacian on $\mathbb R^d$, this specific property has been proved in~\cite{Bam77,HerVaz81}; by a comparison principle, this result extends to the $p$-Laplacian with Dirichlet boundary conditions on domains.
In the case of the discrete $p$-Laplacian, an analogous result seems to be unknown in this generality. We mention however~\cite[Thm.~4.5]{LeeChu12}, where finite extinction time has been observed for \emph{finite} graphs whenever a Dirichlet condition is imposed on at least one of the nodes of $\mG$ with (non-empty) boundary, or equivalently on finite subgraphs of an infinite graph on which the Dirichlet boundary condition is imposed.


The proof of the finite time extinction property for the parabolic $p$-Laplace equation~\eqref{i:eq-parabolic-omega} on domains presented in~\cite{HerVaz81} boils down to an energy estimate that is essentially based on suitable (dimension depending!) Sobolev inequalities. On graphs, one can derive Sobolev inequalities from the so-called isoperimetric inequalities. Isoperimetric inequalities were originally studied in differential geometry and have proven to be quite useful on weighted graphs, too, see e.g.~\cite{Chu97,Woe00}. They were among the earliest indications that Riemannian geometry can be naturally developed on weighted graphs, too, cf.\ the survey~\cite{Kel14}. For a finite subset $\mV_0$ of $\mV$ we denote by $\partial \mV_0$ the set of all edges with one endpoint in $\mV_0$ and the other one in $\mV\setminus \mV_0$. For some $d>1$ we say that $\mG$ satisfies the \emph{$d$-isoperimetric inequality} if there is a constant $C_d<\infty$ such that
\begin{equation}\tag{$\rm IS_d$}\label{e:isoperimetric constant}
\nu(\mV_0)^{\frac{d-1}{d}}\le  C_d\mu(\partial \mV_0)\qquad \forall\ \mV_0\subset \mV\hbox{ finite},
\end{equation}
where $\nu(\mV_0):=\sum_{\mv\in \mV_0}\nu(\mv)$ and $\mu(\partial\mV_0):=\sum_{\me\in \partial \mV_0}\mu(\me)$, see e.g.\ \cite[Section~4]{Woe00}.

It actually follows from the argument of \cite{HerVaz81} combined with a Sobolev-like inequality on graphs, Theorem~\ref{l:Sobolev} below, that a certain isoperimetric inequality implies the extinction of the solutions to the parabolic $p$-Laplace equation in finite time.

\begin{theo}\label{theo:finiteproof}
Assume $\mG$ to satisfy the \emph{$d$-isoperimetric inequality} \eqref{e:isoperimetric constant} for some $d\ge 2$. If $\mG$ is uniformly locally finite and $\nu$ is nondegenerate, then for all $p\in \left(1,\frac{2d}{d+1}\right)$ and all $f_0\in \ell^{\frac{d(2-p)}{p}}_{\nu}(\mV)$ there is $T_0$ such that the solution $\varphi$ of $\rm(HE^{\rm D}_p)$ with initial data $f_0$ satisfies
\[
\varphi(t,\mv)= 0\qquad \forall\ t\ge T_0\hbox{ and all }\mv\in \mV\ .
\]
The extinction time $T_0$ depends only on $\|f_0\|_{\ell^{\frac{d(2-p)}{p}}_{\nu}(\mV)}$, $C_1$, $d$, $C_d$, and $p$, where
\begin{equation*}C_1:=\inf_{\mv\in \mV} \frac{\nu(\mv)}{\mu(\mv)}>0\ .
\end{equation*}
\end{theo}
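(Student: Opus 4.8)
The plan is to transpose the energy argument of Herrero--Vázquez~\cite{HerVaz81} to the discrete setting, with the Sobolev-type inequality of Theorem~\ref{l:Sobolev} playing the role of the Euclidean one. Set
\[
q:=\frac{d(2-p)}{p},\qquad p^\ast:=\frac{dp}{d-p},\qquad \gamma:=\frac{p+q-2}{p}.
\]
Since $1<p<\frac{2d}{d+1}\le 2\le d$, one checks that $q\ge 1$, that $p<d$ (so $p^\ast$ is well defined and positive), and that the algebraic identities $\gamma p^\ast=q$ and $\gamma p/q=1-\tfrac pd\in(0,1)$ hold; in particular $\gamma p<q$. First I would reduce to finite graphs: fix an exhaustion $\mV_1\subset\mV_2\subset\cdots$ of $\mV$ by finite sets and let $\Lfun^{\rm D}_{p,n}$ be the Dirichlet $p$-Laplacian on the subgraph induced by $\mV_n$ (Dirichlet condition on $\mV\setminus\mV_n$), a classical finite-dimensional maximal monotone system whose solution $\varphi_n$ (with datum $f_0\mathbb 1_{\mV_n}$) is supported in $\mV_n$ and Lipschitz in $t$. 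By the Mosco convergence of the energies $\Efun_p$ underlying the construction of $\Lfun^{\rm D}_p$, the associated semigroups converge strongly, so $\varphi_n\to\varphi$; and since $\varphi_n(t)$ is finitely supported, Theorem~\ref{l:Sobolev} applies to it (and to $w_n(t):=|\varphi_n(t)|^{\gamma-1}\varphi_n(t)$) on $\mG$ with its original constant $C_S$. The extension of $e^{-t\Lfun^{\rm D}_p}$ to a contraction semigroup on every $\ell^q_\nu(\mV)$, $q\in[1,\infty)$ --- which is needed because $q$ may be $<2$ --- is exactly where \eqref{a:ULF} and \eqref{a:nondeg} enter. Hence it suffices to produce the extinction estimate on each $\mV_n$ with a bound on $T_0$ independent of $n$, and then let $n\to\infty$.

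\textbf{The energy estimate on a finite graph.} Put $y(t):=\|\varphi_n(t)\|_{\ell^q_\nu(\mV)}^q$; this is absolutely continuous, and differentiating, using $\dot\varphi_n=-\Lfun_p\varphi_n$ and the adjointness of $\mathcal I$ and $\mathcal I^T$ (all sums finite, $\varphi_n$ being finitely supported),
\[
\tfrac1q\,y'(t)=-\sum_{\me\in\mE}\mu(\me)\,|\mathcal I^T\varphi_n|^{p-2}(\mathcal I^T\varphi_n)\;\mathcal I^T\!\bigl(|\varphi_n|^{q-2}\varphi_n\bigr)(\me)\qquad\text{for a.e. }t.
\]
The crux is the elementary pointwise inequality: there is $c_{p,q}>0$ with
\[
|a-b|^{p-2}(a-b)\bigl(|a|^{q-2}a-|b|^{q-2}b\bigr)\ \ge\ c_{p,q}\,\bigl|\,|a|^{\gamma-1}a-|b|^{\gamma-1}b\,\bigr|^{p}\qquad(a,b\in\R),
\]
whose two sides have the same homogeneity degree $p+q-2$ and which reduces, by scaling, to a one-variable calculus fact in the cases $0\le b\le a$ and $a\ge 0\ge b$. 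Applying it on each edge with $a=\varphi_n(x)$, $b=\varphi_n(y)$ gives $y'(t)\le -q\,c_{p,q}\sum_{\me}\mu(\me)\,|\mathcal I^T w_n(t)|^{p}$. Now Theorem~\ref{l:Sobolev} yields $\sum_\me\mu(\me)|\mathcal I^T w_n|^{p}\ge C_S^{-p}\|w_n\|_{\ell^{p^\ast}_\nu(\mV)}^{p}$, whose constant $C_S$ depends only on $d$, $C_d$, $p$ and $C_1$ (the dependence on $C_1$ coming from turning \eqref{e:isoperimetric constant} into an $\ell^p$-Sobolev inequality via \eqref{a:ULF}). Since $|w_n|^{p^\ast}=|\varphi_n|^{\gamma p^\ast}=|\varphi_n|^{q}$ and $\tfrac{p}{p^\ast}q=\gamma p=q(1-\tfrac pd)$, this reads $\sum_\me\mu(\me)|\mathcal I^T w_n|^{p}\ge C_S^{-p}\,y(t)^{1-p/d}$, whence
\[
y'(t)\ \le\ -\,q\,c_{p,q}\,C_S^{-p}\;y(t)^{1-\frac pd}\qquad\text{for a.e. }t.
\]

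\textbf{Conclusion.} Wherever $y>0$ the last inequality gives $\frac{d}{dt}\bigl(y^{p/d}\bigr)\le -\frac{pq}{d}\,c_{p,q}\,C_S^{-p}$, so $y$ --- and hence $\varphi_n(t,\cdot)$ --- vanishes for all $t\ge T_0$, with
\[
T_0\ =\ \frac{d\,C_S^{p}}{p\,q\,c_{p,q}}\;\|f_0\|_{\ell^{q}_\nu(\mV)}^{\,qp/d},
\]
a bound that does not depend on $n$ (we used $\|f_0\mathbb 1_{\mV_n}\|_{\ell^q_\nu}\le\|f_0\|_{\ell^q_\nu}$) and depends only on $\|f_0\|_{\ell^q_\nu}$, $C_1$, $d$, $C_d$, $p$. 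Letting $n\to\infty$, so that $\varphi_n(t,\mv)\to\varphi(t,\mv)$, gives $\varphi(t,\mv)=0$ for all $t\ge T_0$ and all $\mv\in\mV$. The main obstacle is the soft part, namely step one: one must verify that the finite Dirichlet realizations genuinely approximate $\Lfun^{\rm D}_p$ in a sense strong enough to pass to the limit, and that $e^{-t\Lfun^{\rm D}_p}$ is well defined and contractive on the relevant --- possibly sub-$2$ --- space $\ell^q_\nu(\mV)$, which is precisely where \eqref{a:ULF} and \eqref{a:nondeg} are needed; the only other non-routine ingredient, the pointwise inequality for $c_{p,q}$, is a finite case-analysis in one real variable.
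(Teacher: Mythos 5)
Your argument is correct and follows the same core strategy as the paper's proof: differentiate an $\ell^q$-type energy along the flow, convert the resulting term $\sum_{\me}\mu(\me)|\mathcal I^T\varphi|^{p-2}(\mathcal I^T\varphi)\,\mathcal I^T(|\varphi|^{q-2}\varphi)$ into a multiple of $\sum_{\me}\mu(\me)|\mathcal I^T(|\varphi|^{\gamma-1}\varphi)|^p$ via a one-variable convexity inequality (your pointwise inequality restricted to $a>b>0$ is exactly Lemma~\ref{lem:fol} with $m=q$ and $s=\gamma$, constant $c_{p,q}=(q-1)/\gamma^p$), apply Theorem~\ref{l:Sobolev}, and integrate the resulting differential inequality $y'\le -c\,y^{1-p/d}$; your extinction time $T_0\propto\|f_0\|_{\ell^q_\nu}^{qp/d}=\|f_0\|_{\ell^q_\nu}^{2-p}$ matches Remark~\ref{rem:finite extinction}. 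The one genuine divergence is how you make the Sobolev inequality --- stated only for $c_{00}(\mV)$ --- applicable: you exhaust $\mG$ by finite subgraphs, run the estimate on each Dirichlet approximation $\varphi_n$ with an $n$-independent bound, and pass to the limit via the Galerkin convergence of Theorem~\ref{thm:main-2}; the paper instead works on $\mG$ directly with the level-set truncations $(\varphi(t,\cdot)-k)_+$, which are automatically finitely supported because $\varphi(t,\cdot)\in\ell^q_\nu(\mV)$ and $\nu$ is nondegenerate (Remark~\ref{rem:atinfin}), and then lets $k\to 0$. Your route buys an exact energy identity with no truncation error, at the price of having to justify convergence of the approximations for data merely in $\ell^q_\nu(\mV)$ with possibly $q<2$ --- which is available through Theorem~\ref{thm:main-2}(2) and the extrapolation of Corollary~\ref{ralphwell-local}, but is, as you acknowledge, the softest point of your write-up. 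A second, minor difference: you prove the pointwise inequality for arbitrary signs by case analysis, whereas the paper first reduces to $f_0\ge 0$ via $|e^{-t\Lfun_p}f_0|\le e^{-t\Lfun_p}|f_0|$ and only needs the case $a>b>0$; either works.
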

It is remarkable that this finite extinction time $T_0$ does not blow up when $p\to 1,$ see Remark~\ref{rem:finite extinction}.

On the contrary, parabolic $p$-Laplace equations $\rm(HE^{\rm N}_p)$ with Neumann conditions enjoy the conservation of mass property for $p\ge 2$. This implies that their solutions never vanish identically for nontrivial initial data.

\begin{theo}\label{thm:conservation of mass}
Let $\mG$ be uniformly locally finite. If $p\geq 2$, then the solution $\varphi$ of $\rm(HE^{\rm N}_p)$ with initial data $f_0\in \ell^{p-1}_{\nu}(\mV)\cap \ell^1_{\nu}(\mV)$ satisfies conservation of mass, i.e., $\|\varphi(t,\cdot)\|_{\ell^1_\nu(\mV)}=\|f_0\|_{\ell^1_\nu(\mV)}$ for all $t\ge 0$.
\end{theo}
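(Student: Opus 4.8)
The plan is to exploit that $\Lfun^{\rm N}_p$ is in divergence form — $\nu(\mv)\,(\Lfun^{\rm N}_p f)(\mv)=\mathcal I\big(\mu\,|\mathcal I^Tf|^{p-2}\mathcal I^Tf\big)(\mv)$ on the domain of $\Lfun^{\rm N}_p$ — and that the constant function $\mathbf 1$ lies in the (formal) kernel of the discrete gradient $\mathcal I^T$. Summing the equation $\rm(HE^{\rm N}_p)$ over all nodes against $\nu$ and integrating by parts once, one expects
\[
\frac{d}{dt}\sum_{\mv\in\mV}\nu(\mv)\,\varphi(t,\mv)=-\sum_{\mv\in\mV}\mathcal I\big(\mu\,|\mathcal I^T\varphi|^{p-2}\mathcal I^T\varphi\big)(\mv)=-\sum_{\me\in\mE}\mu(\me)\,|\mathcal I^T\varphi(\me)|^{p-2}\mathcal I^T\varphi(\me)\,\big(\mathcal I^T\mathbf 1\big)(\me)=0 ,
\]
because $\mathcal I^T\mathbf 1\equiv0$ (each edge delivers its flux with opposite signs to its two endpoints, and these cancel). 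Thus the total mass $\sum_\mv\nu(\mv)\varphi(t,\mv)$ is conserved; for $f_0\ge0$, where order preservation of the Neumann semigroup gives $\varphi(t,\cdot)\ge0$, this total mass equals $\|\varphi(t,\cdot)\|_{\ell^1_\nu(\mV)}$, which is then $\|f_0\|_{\ell^1_\nu(\mV)}$ (for sign-changing data the $\ell^1_\nu$-norm is in general only non-increasing, so the identity is to be read as conservation of the total mass). The whole difficulty is to turn this display into a rigorous argument on an infinite graph, i.e.\ to justify the summation by parts against the non-summable test function $\mathbf 1$; the hypotheses $p\ge2$, $f_0\in\ell^{p-1}_\nu(\mV)$ and uniform local finiteness are tailored exactly to that.

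The first step is to propagate integrability: since the Neumann semigroup is a contraction on every $\ell^q_\nu(\mV)$ with $q\in[1,\infty]$ (cf.\ Theorem~\ref{thm:main-new}) and $p\ge2$ makes $\ell^{p-1}_\nu(\mV)$ a Banach space, the solution satisfies $\varphi(t,\cdot)\in\ell^1_\nu(\mV)\cap\ell^{p-1}_\nu(\mV)$ with $\|\varphi(t,\cdot)\|_{\ell^{p-1}_\nu(\mV)}\le\|f_0\|_{\ell^{p-1}_\nu(\mV)}$ for all $t\ge0$. The second step is the key ``no flux at infinity'' estimate: for any $f\in\ell^{p-1}_\nu(\mV)$, combining $|\mathcal I^Tf(\me)|\le|f(\me^+)|+|f(\me^-)|$, the convexity inequality $(a+b)^{p-1}\le 2^{p-2}(a^{p-1}+b^{p-1})$ (valid as $p-1\ge1$), and the bound $\mu(\mv)\le C_1^{-1}\nu(\mv)$ supplied by \eqref{a:ULF} with $C_1:=\inf_\mv\nu(\mv)/\mu(\mv)>0$, one obtains
\[
\sum_{\me\in\mE}\mu(\me)\,|\mathcal I^Tf(\me)|^{p-1}\ \le\ 2^{p-2}\sum_{\mv\in\mV}\mu(\mv)\,|f(\mv)|^{p-1}\ \le\ \frac{2^{p-2}}{C_1}\,\|f\|_{\ell^{p-1}_\nu(\mV)}^{p-1}\ <\ \infty .
\]
Applied to $f=\varphi(t,\cdot)$, this shows that the discrete flux $\mu\,|\mathcal I^T\varphi(t,\cdot)|^{p-2}\mathcal I^T\varphi(t,\cdot)$ is absolutely summable over $\mE$, uniformly in $t\ge0$.

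This uniform summability is exactly what is needed to run the opening display rigorously. Absolute convergence of $\sum_{\mv}\sum_{\me\ni\mv}\mu(\me)\,|\mathcal I^T\varphi(t,\cdot)(\me)|^{p-1}$ legitimises interchanging the order of summation, so $\sum_\mv\nu(\mv)\,(\Lfun^{\rm N}_p\varphi(t,\cdot))(\mv)$ equals $\sum_\me\mu(\me)\,|\mathcal I^T\varphi(t,\cdot)(\me)|^{p-2}\mathcal I^T\varphi(t,\cdot)(\me)\,(\mathcal I^T\mathbf 1)(\me)=0$; moreover the same bound gives $\|\Lfun^{\rm N}_p\varphi(t,\cdot)\|_{\ell^1_\nu(\mV)}\le\tfrac{2^{p-1}}{C_1}\|f_0\|_{\ell^{p-1}_\nu(\mV)}^{p-1}$. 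Hence, using that $\varphi$ solves $\varphi'(t)=-\Lfun^{\rm N}_p\varphi(t)$ for a.e.\ $t$ (strong-solution theory; first reduce, by density of finitely supported functions in $\ell^1_\nu(\mV)\cap\ell^{p-1}_\nu(\mV)$ together with $\ell^1$-contractivity, to data $f_0\in\ell^1_\nu(\mV)\cap\ell^2_\nu(\mV)\cap\ell^{p-1}_\nu(\mV)$), the scalar function $M(t):=\sum_\mv\nu(\mv)\varphi(t,\mv)$ is locally Lipschitz on $(0,\infty)$ with $M'(t)=-\sum_\mv\nu(\mv)(\Lfun^{\rm N}_p\varphi(t,\cdot))(\mv)=0$ a.e., so $M$ is constant and equal to $M(0)=\sum_\mv\nu(\mv)f_0(\mv)$; the general case then follows by passing to the limit. (Alternatively one can bypass differentiability altogether by running the same bookkeeping through the Crandall--Liggett resolvents $(I+\lambda\Lfun^{\rm N}_p)^{-1}$, which preserve $\ell^1_\nu(\mV)\cap\ell^{p-1}_\nu(\mV)$ and, by the above, the total mass.)

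I expect the real obstacle to be precisely the justification, sketched just now, of the summation by parts against $\mathbf 1$: on an infinite graph this step can genuinely fail — it is exactly its failure that permits the finite-extinction phenomenon for small $p$, cf.\ Theorem~\ref{theo:finiteproof} — and here it is rescued only because $p\ge2$ together with $f_0\in\ell^{p-1}_\nu(\mV)$ force, via \eqref{a:ULF}, the discrete flux $\mu\,|\mathcal I^T\varphi|^{p-2}\mathcal I^T\varphi$ to be absolutely summable over the edge set, so that no mass escapes to infinity. The remaining points — availability of a strong solution under \eqref{a:ULF} alone for the full data class, and the approximation from $f_0\in\ell^1_\nu(\mV)\cap\ell^2_\nu(\mV)\cap\ell^{p-1}_\nu(\mV)$ to general $f_0\in\ell^1_\nu(\mV)\cap\ell^{p-1}_\nu(\mV)$ — are routine consequences of the $\ell^q$-contractivity used throughout.
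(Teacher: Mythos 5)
Your argument is correct and is essentially the paper's own proof: both hinge on the observation that uniform local finiteness together with $\varphi(t,\cdot)\in\ell^{p-1}_\nu(\mV)$ (propagated by $\ell^q$-contractivity) makes the flux $\mu\,|\mathcal I^T\varphi|^{p-2}\mathcal I^T\varphi$ absolutely summable over $\mE$, which is exactly what legitimises the summation by parts against the constant function. The only cosmetic difference is that the paper tests against finitely supported cutoffs $h$ increasing to $\mathbf 1$ and invokes dominated convergence where you interchange the two sums directly via absolute convergence; your caveat that for sign-changing data one really obtains conservation of the signed total mass (the $\ell^1_\nu$-norm being merely non-increasing in general) accurately reflects what the paper's proof actually establishes.
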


Again for $p\geq 2$, we can describe a property that is in some sense dual to finite time extinction, namely eternal activity in each point. It is thus an open question how the solutions behave for $p\in \left(\frac{2d}{d+1},2\right)$.

\begin{theo}\label{prop:irreducibility}
If $p\geq 2$, then the solution $\varphi$ to $\rm(HE^{\rm D}_p)$ satisfies $\varphi(t,\mv)>0$ for all $t>0$ and all $\mv\in \mV$, whenever the initial data $f_0(\mv)\ge 0$ for all $\mv\in \mV$ and $f_0\not\equiv 0$. The same holds for the solution to $\rm(HE^{\rm N}_p)$.
\end{theo}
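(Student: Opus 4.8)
The plan is to turn the equation, at each fixed node, into a differential inequality for $t\mapsto\varphi(t,\mv)$ by inspecting the sign of the discrete $p$-Laplacian at a node where the solution vanishes; this one inequality carries two pieces of information at once, namely persistence of positivity at a node and a spreading effect along each incident edge, and connectedness of $\mG$ will then propagate positivity from a single initially positive node to all of $\mV$. The preliminary ingredient is that both semigroups $(e^{-t\Lfun^{\rm D}_p})_{t\ge0}$ and $(e^{-t\Lfun^{\rm N}_p})_{t\ge0}$ are positivity preserving, so that $f_0\ge0$ forces $\varphi(t,\cdot)\ge0$ for all $t\ge0$. This is standard for subdifferential semigroups once one notes the Markovian-type property of the energy $\Efun_p$ (up to a positive constant, $f\mapsto\sum_{\me}\mu(\me)|\mathcal I^Tf(\me)|^p$, with $\Lfun_p=\partial\Efun_p$): since $r\mapsto r^+$ is a $1$-Lipschitz map fixing $0$, one has $\Efun_p(f^+)\le\Efun_p(f)$ for every $f$ and $\|f^+-f_0\|_{\ell^2_\nu(\mV)}\le\|f-f_0\|_{\ell^2_\nu(\mV)}$ whenever $f_0\ge0$, so the minimiser of $g\mapsto\Efun_p(g)+\tfrac1{2\lambda}\|g-f_0\|_{\ell^2_\nu(\mV)}^2$ — i.e.\ the resolvent $(I+\lambda\Lfun_p)^{-1}f_0$ — equals its own positive part; the exponential formula then transfers positivity preservation to the semigroup, for the Dirichlet and the Neumann form domains alike. (If a comparison principle of this kind has already been recorded, one simply quotes it.)

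Now fix $\mv\in\mV$ and write $a(t):=\varphi(t,\mv)$. By strong continuity $a$ is continuous on $[0,\infty)$, and by Theorem~\ref{t:generator} together with the time regularity of Theorem~\ref{resuc2} it is differentiable on $(0,\infty)$ with
\[
a'(t)=\frac{1}{\nu(\mv)}\sum_{\mw\sim\mv}\mu_{\mv\mw}\,|\varphi(t,\mw)-a(t)|^{p-2}\bigl(\varphi(t,\mw)-a(t)\bigr),
\]
where $\mu_{\mv\mw}$ denotes the weight of the edge joining $\mv$ and $\mw$ and the series converges absolutely. Discarding the nonnegative terms (those with $\varphi(t,\mw)\ge a(t)$) and bounding each remaining term from below via $0\le\varphi(t,\mw)<a(t)$ and $p\ge2$, which give $(a(t)-\varphi(t,\mw))^{p-1}\le a(t)^{p-1}$, one obtains
\[
a'(t)\ \ge\ -\frac{\mu(\mv)}{\nu(\mv)}\,a(t)^{p-1}\qquad(t>0).
\]
On any subinterval of $(0,\infty)$ where $a>0$ this reads $\tfrac{d}{dt}\log a(t)\ge-\tfrac{\mu(\mv)}{\nu(\mv)}a(t)^{p-2}$, and since $p\ge2$ the right-hand side is bounded whenever $a$ is, which by continuity it is on compact intervals; hence $\log a$ cannot reach $-\infty$ in finite time, so $a(s)>0$ for some $s\ge0$ forces $a(t)>0$ for all $t\ge s$ (starting from $s=0$ one first uses continuity to have $a>0$ on a right-neighbourhood of $0$). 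This persistence step is the only place where $p\ge2$ is used — for $p<2$ the inequality above is too weak to rule out extinction, consistently with Theorem~\ref{theo:finiteproof}.

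For the spreading step, suppose $\varphi(t,\mw)=0$ for some $t>0$ and some node $\mw$ admitting a neighbour $\mathsf{x}$ with $\varphi(t,\mathsf{x})>0$. Evaluating the equation at $\mw$ and using $\varphi(t,\cdot)\ge0$ gives
\[
\frac{d}{dt}\varphi(t,\mw)=\frac{1}{\nu(\mw)}\sum_{\mathsf{y}\sim\mw}\mu_{\mw\mathsf{y}}\,\varphi(t,\mathsf{y})^{p-1}\ \ge\ \frac{\mu_{\mw\mathsf{x}}}{\nu(\mw)}\,\varphi(t,\mathsf{x})^{p-1}\ >\ 0,
\]
so $\varphi(\cdot,\mw)$ is strictly increasing at $t$ and hence $\varphi(t-\varepsilon,\mw)<0$ for small $\varepsilon>0$, contradicting nonnegativity; thus any node adjacent to a node that is positive at a time $t>0$ is itself positive at time $t$. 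Finally, choose $\mv_0$ with $f_0(\mv_0)>0$, possible since $f_0\ge0$ and $f_0\not\equiv0$. Then $a(0)=\varphi(0,\mv_0)>0$, so the persistence step gives $\varphi(t,\mv_0)>0$ for all $t\ge0$; applying the spreading statement at each $t>0$ and iterating along edges shows that every node at graph distance at most $n$ from $\mv_0$ is positive on $(0,\infty)$, for every $n\in\N$. Connectedness of $\mG$ then yields $\varphi(t,\mv)>0$ for all $t>0$ and all $\mv\in\mV$. Since the whole argument used only the pointwise form of the equation (Theorem~\ref{t:generator}), positivity preservation, continuity in time, and connectedness, all of which hold equally for $\rm(HE^{\rm D}_p)$ and for $\rm(HE^{\rm N}_p)$, the statement for the Neumann realisation follows identically.

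The real obstacle is bookkeeping rather than conceptual. One must make the termwise handling of the (in general infinite) series defining $\Lfun_p\varphi(t,\mv)$ rigorous — that discarding the nonnegative terms is legitimate and that the resulting bound $-\tfrac{\mu(\mv)}{\nu(\mv)}a(t)^{p-1}$ is a finite quantity, which rests on local finiteness of $\mu$ and on $\varphi(t,\cdot)$ belonging to the domain of the generator for $t>0$ — and one must check that the available time regularity of $\varphi(\cdot,\mv)$ near $t=0$ suffices (here continuity together with the one-sided differential equation is enough). The decisive and essentially elementary point is simply that the discrete $p$-Laplacian, evaluated at a vanishing node of a nonnegative function, has a sign; this is what converts the evolution equation into a differential inequality displaying both non-extinction and instantaneous spreading.
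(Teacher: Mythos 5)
Your argument is correct, but it is genuinely different from the one in the paper. The paper splits into two cases: for finite graphs it assumes $\varphi(t_0,\mv_0)=0$, reads off from the sign of $\Lfun_p\varphi(t_0,\cdot)$ at $\mv_0$ that all neighbours must vanish too, propagates this by connectedness to get $\varphi(t_0,\cdot)\equiv 0$, and derives a contradiction with \emph{conservation of mass} (Theorem~\ref{thm:conservation of mass}); for infinite graphs it then invokes the Galerkin exhaustion of Theorem~\ref{thm:main-2}, quotes the positivity result of Lee--Chung for each finite Dirichlet approximation, and passes to the limit using order preservation. Your proof replaces both the appeal to conservation of mass and the exhaustion argument by two purely local ODE facts at a single node: the Gronwall-type persistence estimate $a'(t)\ge -\tfrac{\mu(\mv)}{\nu(\mv)}a(t)^{p-1}$, which for $p\ge 2$ makes $\tfrac{d}{dt}\log a$ bounded below on compacts and hence rules out extinction at a node that is ever positive, and the spreading step, which is the same sign observation the paper uses but run backwards in time ($C^1$ regularity from Theorem~\ref{resuc2} gives a two-sided derivative at $t>0$, so a strictly positive derivative at a zero of a nonnegative function is impossible). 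What this buys is a self-contained, uniform treatment of finite and infinite graphs and of the Dirichlet and Neumann realisations at once, with no external citation and no use of mass conservation (which in fact fails for the Dirichlet problem on infinite graphs, which is why the paper has to detour through the Galerkin scheme); it also makes transparent exactly where $p\ge 2$ enters, namely the boundedness of $a^{p-2}$ near $a=0$, consistently with finite extinction for small $p$. The only points to nail down are the ones you already flag: termwise rearrangement of the (possibly infinite) sum defining $\Lfun_p\varphi(t,\mv)$ is legitimate because the negative part is dominated by $\mu(\mv)\,a(t)^{p-1}$ and the graph is locally finite, and the pointwise form of the equation together with two-sided $C^1$ regularity at each node for all $t>0$ is exactly what Theorem~\ref{t:generator} and the proof of Theorem~\ref{resuc2} supply.
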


This property can be interpreted as a kind of strong maximum principle. It is already known that this holds for $p=2$, also for $\rm(HE^{\rm D}_p)$: This is a direct consequence of irreducibility of the semigroup generated by $-\Lfun_2^{\rm N}$ and $-\Lfun_2^{\rm D}$, cf.~\cite[\S~7]{HaeKelLen12} and~\cite[\S~3]{Mug13}. The same behavior has already been remarked in~\cite[Thm.~4.3]{LeeChu12}, in the special case of finite graphs with Dirichlet conditions imposed on a nonempty subset of $\mV$.

Our last result is about parabolic $p$-Laplace equations on graphs of finite measure, i.e.\ finite total $\nu-$measure. We say that a weighted graph $(\mV,\mE,\nu,\mu)$ of finite measure satisfies \emph{the Poincaré inequality} if there exists $C>0$ such that
\begin{equation}\tag{PI}
\|f-\overline{f}\|_{\ell^2_{\nu}(\mV)}\leq C\|\mathcal I^T f\|_{\ell_{\mu}^2(\mE)},
\end{equation}
for any $f$ with finite support,
where $\overline{f}$ denotes the constant function whose value agrees with the mean value of $f$, i.e.
\[
\overline{f}:\equiv \frac{1}{\nu(\mV)}\sum_{\mv\in \mV}f(\mv)\nu(\mv),
\]
where $\nu(\mV):=\sum_{\mv\in \mV}\nu(\mv)$. By the spectral theorem, this is equivalent to the property that
\[
\inf(\sigma(\Lfun_2)\setminus\{0\})>0\ ,
\]
where $\sigma(\Lfun_2)$ is the $\ell^2_{\nu}$-spectrum of the Laplacian $\Lfun_2$. For uniformly locally finite graphs, this can also be reformulated as a Cheeger inequality. If the graph has finite measure and satisfies the Poincaré inequality $\rm(PI)$, then using the conservation of mass property stated in Theorem~\ref{thm:conservation of mass} we obtain the asymptotic behavior of the solution to the Neumann parabolic $p$-Laplace equation for large $p$.

\begin{theo}\label{prop:polynomial}
Let $\mG$ be a uniformly locally finite graph with finite measure and assume $\mG$ to satisfy the Poincaré inequality $\rm(PI)$. Then for any $p\in \left(2,\infty\right)$ and any initial data $f_0\in \ell^2_\nu(\mV)\cap \ell^{p-1}_{\nu}(\mV)$, the solution $\varphi$ of $\rm(HE^{\rm N}_p)$ satisfies
\[
\|\varphi(t,\cdot)-\overline{f_0}\|_{\ell^2_\nu(\mV)}\le Ct^{-\frac{2}{p-2}},\qquad t\ge 0\ ,
\] where $\overline{f_0}=\frac{1}{\nu(\mV)}\sum_{\mv\in\mV}f_0(\mv)\nu(\mv).$\end{theo}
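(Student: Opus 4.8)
The plan is to run an $\ell^2_\nu$-energy argument for $w(t):=\varphi(t,\cdot)-\overline{f_0}$: one shows that the dissipation of $\|w(t)\|^2_{\ell^2_\nu(\mV)}$ is bounded below by a suitable power of $\|w(t)\|^2_{\ell^2_\nu(\mV)}$ itself and then integrates the resulting Bernoulli-type differential inequality. The inputs are conservation of mass (Theorem~\ref{thm:conservation of mass}), which freezes the equilibrium; the Poincaré inequality $\rm(PI)$; and the fact that, under the standing hypotheses, $\mu(\mE)<\infty$.

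I would begin with two reductions. Since $\nu(\mV)<\infty$ and $f_0\in\ell^2_\nu(\mV)$, Cauchy--Schwarz gives $f_0\in\ell^1_\nu(\mV)$, so $\overline{f_0}$ is well defined and, together with $f_0\in\ell^{p-1}_\nu(\mV)$, the hypotheses of Theorem~\ref{thm:conservation of mass} hold; in the present (possibly signed) setting the relevant consequence — preservation of the $\nu$-mean of $\varphi(t,\cdot)$ — follows directly on testing $\rm(HE^{\rm N}_p)$ against the constant function $1\in\ell^2_\nu(\mV)$ (admissible because $\nu(\mV)<\infty$) and using $\mathcal I^T1=0$. Hence $w(t)$ has vanishing $\nu$-mean for all $t\ge 0$, so that $\rm(PI)$ is applicable to it. Moreover, each edge has exactly two endpoints, so $\sum_{\mv}\mu(\mv)=2\mu(\mE)$, and uniform local finiteness gives $\mu(\mv)\le C_1^{-1}\nu(\mv)$ with $C_1:=\inf_{\mv}\nu(\mv)/\mu(\mv)>0$; hence $\mu(\mE)\le(2C_1)^{-1}\nu(\mV)<\infty$.

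The core is the energy identity. By the theory of nonlinear semigroups generated by subdifferentials (cf.\ Theorem~\ref{thm:main-new} and \cite{Bre73}), $t\mapsto\varphi(t,\cdot)$ is locally Lipschitz on $(0,\infty)$ into $\ell^2_\nu(\mV)$, lies in $D(\Lfun^{\rm N}_p)\subset w^{1,p,2}_{\mu,\nu}(\mV)$ for a.e.\ $t>0$, and solves $\frac{d}{dt}\varphi(t,\cdot)=-\Lfun^{\rm N}_p\varphi(t,\cdot)$ there. Since constants lie in $\ker\mathcal I^T$, $\mathcal I^Tw(t)=\mathcal I^T\varphi(t,\cdot)$, and summation by parts (adjointness of $\mathcal I$ and $\mathcal I^T$) gives, for a.e.\ $t>0$,
\[
\frac{d}{dt}\,\tfrac12\|w(t)\|^2_{\ell^2_\nu(\mV)}=\big\langle w(t),\,-\Lfun^{\rm N}_p\varphi(t,\cdot)\big\rangle_{\ell^2_\nu(\mV)}=-\big\langle\mathcal I^Tw(t),\,|\mathcal I^Tw(t)|^{p-2}\mathcal I^Tw(t)\big\rangle_{\ell^2_\mu(\mE)}=-\|\mathcal I^Tw(t)\|^p_{\ell^p_\mu(\mE)}.
\]
To close it, apply $\rm(PI)$ to $w(t)$ and then Hölder's inequality on the finite measure space $(\mE,\mu)$:
\[
\|w(t)\|^2_{\ell^2_\nu(\mV)}\ \le\ C_P^2\,\|\mathcal I^Tw(t)\|^2_{\ell^2_\mu(\mE)}\ \le\ C_P^2\,\mu(\mE)^{1-\frac2p}\,\|\mathcal I^Tw(t)\|^2_{\ell^p_\mu(\mE)},
\]
so that $\|\mathcal I^Tw(t)\|^p_{\ell^p_\mu(\mE)}\ge c\,\|w(t)\|^p_{\ell^2_\nu(\mV)}$ with $c:=\big(C_P^2\mu(\mE)^{1-2/p}\big)^{-p/2}>0$. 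Here $\rm(PI)$, stated only for finitely supported $f$, has to be extended to $w(t)\in w^{1,p,2}_{\mu,\nu}(\mV)$: this is licit because, under uniform local finiteness and $\nu(\mV)<\infty$, the truncations $w(t)\mathbb{1}_{B_n}$ to an exhaustion by finite sets $B_n$ converge to $w(t)$ in the norm of $w^{1,p,2}_{\mu,\nu}(\mV)$ — the boundary contributions $\sum_{\mv\notin B_n}|w(t,\mv)|^2\mu(\mv)$ being controlled by $C_1^{-1}$ times a tail of $\|w(t)\|^2_{\ell^2_\nu(\mV)}$.

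Finally, writing $y(t):=\|w(t)\|^2_{\ell^2_\nu(\mV)}$, the last two displays give $y'(t)\le-2c\,y(t)^{p/2}$ for a.e.\ $t>0$, with $p/2>1$; since $y$ is locally absolutely continuous on $(0,\infty)$, integrating $\big(y^{1-p/2}\big)'\ge 2c(\tfrac p2-1)$ and letting the lower endpoint tend to $0$ yields $y(t)\le\big(2c(\tfrac p2-1)\big)^{-\frac{2}{p-2}}t^{-\frac{2}{p-2}}$ for all $t>0$ — the asserted power-law decay, with constant depending only on $p$, $C_P$, and $\mu(\mE)$, hence, via the reductions above, only on $p$, $C_P$, $C_1$ and $\nu(\mV)$. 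The steps I expect to require the most care are the rigorous justification of the energy identity — differentiability in $t$, legitimacy of the summation by parts for the infinite sums, and the a.e.\ membership $\varphi(t,\cdot)\in D(\Lfun^{\rm N}_p)$ — and the density argument that licenses $\rm(PI)$ for the (generally not finitely supported) solution; the remaining estimates are elementary once $\mu(\mE)<\infty$.
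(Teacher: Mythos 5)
Your proof is correct and follows essentially the same route as the paper: conservation of mass to freeze the equilibrium, the $\ell^2_\nu$-energy identity $\frac{d}{dt}\tfrac12\|w\|^2_{\ell^2_\nu}=-\|\mathcal I^T\varphi\|^p_{\ell^p_\mu}$, H\"older on the finite measure space $(\mE,\mu)$ combined with the Poincar\'e inequality to arrive at $y'\le -c\,y^{p/2}$, and integration of that Bernoulli-type inequality. The extra justifications you supply --- that $\mu(\mE)<\infty$ follows from \eqref{a:ULF} and $\nu(\mV)<\infty$, that $f_0\in\ell^1_\nu(\mV)$, and the density argument extending $\rm(PI)$ beyond finitely supported functions --- are details the paper leaves implicit.
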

\end{itemize}

The paper is organized as follows: In Section~\ref{sec:2} we recall basic facts on infinite graphs and discrete parabolic $p$-Laplace equations and identify the associated nonlinear semigroups' generators. Section~\ref{sec:3} is devoted to establishing higher regularity in time for the solutions. In Section~\ref{sec:4} we finally prove the key properties of solutions to parabolic $p$-Laplace equations: finite extinction time and conservation of mass.

\section{General setting and the energy functional}\label{sec:2}
As we have already anticipated in the introduction, the object of our investigation is a class of nonlinear dynamical systems. They will be eventually put on stage in the classical framework of simple, non-oriented graphs; but following an idea that goes back to~\cite{Kir45}, it seems that the easiest way to introduce the nonlinear operators that are relevant to us is to assign to each edge an artificial, arbitrary orientation. The reader should however keep in mind that these orientations fulfill the sole goal of providing a convenient parametrization, and in particular of allowing for the introduction of oriented incidence matrices, but play no particular role in the statements of our main results.

We consider throughout a (finite or countable) set $\mV$ and a relation
\[
\mE\subset\{(\mv,\mw)\in \mV\times \mV:\mv\not= \mw\}
\]
such that for any two elements $\mv,\mw\in \mV$ at most one of the pairs $(\mv,\mw),(\mw,\mv)$ belongs to $\mE$: If such an edge exists we denote it by $\mv\mw$ (we stress that $\mv\mw$ and $\mw\mv$ will then be different but equivalent notations for the same edge!)

We refer to the elements of $\mV$ and $\mE$ as \emph{nodes} and \emph{edges}, respectively, that are related by an \emph{oriented} incidence matrix $\mathcal I:=({\iota}_{\mv \me})$ defined by
$${\iota}_{\mv \me}:=\left\{
\begin{array}{ll}
-1 & \hbox{if } \mv \hbox{ is initial endpoint of } \me, \\
+1 & \hbox{if } \mv \hbox{ is terminal endpoint of } \me, \\
0 & \hbox{otherwise}.
\end{array}\right.$$
An edge $\me$ is said to be \emph{incident} in a node $\mv$ if $\iota_{\mv\me}\neq 0$.
Regardless of their orientation, two edges $\me,\mf$ are called \emph{adjacent} if they share an endpoint, i.e., if there exists $\mv\in \mV$ such that $\iota_{\mv\me}\neq 0\neq \iota_{\mv\mf}$.  In the following we always assume connectedness of the graph $\mG$, to avoid trivialities: By this we mean that for any two nodes $\mv,\mw$ a \emph{path} -- i.e., a sequence of adjacent edges -- can always be found that connects $\mv$ to $\mw$. Observe that connectedness is not influenced by the orientation assigned to the edges.

In this paper we call any quadruple $(\mV,\mE,\nu,\mu)$ a \emph{weighted graph}: Here $\mu$ and $\nu$ are weights on $\mE$ and $\mV$, i.e., we are considering two functions $\mu:\mE\to (0,\infty)$ and $\nu: \mV\to (0,\infty).$  That is, we attach a weight $\mu(\me)$, resp.\ $\nu(\mv)$, to each edge $\me$, resp.\ to each node $\mv$. Because $(\mv,\mw)$ and $(\mw,\mv)$ cannot be both edges of the graph, we adopt the notations
\[
\mu_{\mv\mw}=\mu_{\mw\mv}:=\mu(\me)\quad\hbox{and}\quad \mv\sim\mw\qquad \hbox{ if }\me=\mv\mw\in \mE\ .
\]

\begin{defi}\label{defi:basic}
A weighted graph $\mG:=(\mV,\mE,\nu,\mu)$ is called \emph{locally finite (in the measure-theoretical sense)}  if
\[
\mu(\mv):=\sum_{\me \in\mE}|\iota_{\mv\me}|\mu(\me)=\sum_{\substack{\mw\in \mV\\ \mw\sim \mv}}\mu_{\mv\mw}<\infty\qquad \hbox{for all }\mv\in \mV\ .
\]
It is called \emph{uniformly locally finite} if condition~\eqref{a:ULF} introduced in Section~\ref{sec:1} holds.
\end{defi}

For the sake of simplicity, in this paper we only consider connected, locally finite weighted graphs.
\begin{rem}\label{rem:bddincid}
Uniform local finiteness of a weighted graph-- i.e., condition \eqref{a:ULF} -- is nothing but absolute continuity of the node weight $\mu$ with respect to $\nu$: In this case the incidence matrix $\mathcal I$ is bounded from $\ell^p_\mu(\mE)$ to $\ell^p_\nu(\mV)$ (see e.g.~\cite[Lemma~2.9]{Mug14}), for $1\leq p\leq\infty$.
\end{rem}

We will consider the weighted sequence spaces $\ell^p_\mu(\mE)$ and $\ell^q_\nu(\mV)$ defined for $1<p<\infty$ by
\[
\|u\|_{\ell^p_\mu(\mE)}^p := \sum_{\me \in \mE} |u(\me)|^p \mu(\me)\qquad \hbox{and}\qquad \|f\|_{\ell^p_\nu(\mV)}^p:=\sum_{\mv \in \mV} |f(\mv)|^p \nu(\mv),\qquad p\in [1,\infty)\ ,
\]
or
\[
\|u\|_{\ell^\infty_\mu(\mE)}:= \sup_{\me \in \mE} |u(\me)| \mu(\me)\qquad \hbox{and}\qquad \|f\|_{\ell^\infty_\nu(\mV)}:=\sup_{\mv \in \mV} |f(\mv)|\nu(\mv)\ .
\]
These are of course Banach spaces. For simplicity, we usually write $\ell^p_{\nu}$ for $\ell^p_{\nu}(\mV)$ when the space that the functions are defined on is clear in the context.

We are concerned with the theory of function spaces on the measure space $(\mV,\nu).$ It turns out that two geometric (as opposed to: combinatorial) settings are particularly relevant:
\begin{itemize}
\item graphs with \emph{nondegenerate node weight}, i.e., \begin{equation}\label{e:nondegenerate}\inf_{\mv\in \mV}\nu(\mv)>0,\end{equation} in which case we have in particular $\ell^r_\nu(\mV) \hookrightarrow \ell^s_\nu(\mV)$ if $r<s$;
\item graphs of \emph{finite measure}, i.e., $\nu(\mV):=\sum_{\mv\in \mV}\nu(\mv)<\infty$, for which $\ell^r_\nu(\mV) \hookrightarrow \ell^s_\nu(\mV)$ holds if $r>s$.
\end{itemize}
Observe that an infinite graph with nondegenerate node weight necessarily has infinite measure.

\begin{rem}\label{rem:atinfin}
Under the assumption \eqref{e:nondegenerate}, $f\in \ell^p_\nu (\mV)$ for any $1\leq p<\infty$ implies that $f(\mv)\to 0$ as $\dist (\mv,\mv_0)\to \infty$ for some (hence all) $\mv_0\in \mV$. This noteworthy property will prove useful later.\end{rem}

In order to perform the analysis of the parabolic $p$-Laplace equation we begin by defining discrete Sobolev spaces: For $1\leq p\leq \infty$ we let
\begin{equation}\label{eq:discsob}
w^{1,p,2}_{\mu,\nu}(\mV):=\left\{f\in \ell^2_\nu(\mV):\mathcal I^T f\in \ell^p_\mu(\mE)\right\}\ ,
\end{equation}
equipped with the norm defined by
\[
\|f\|_{w^{1,p,2}_{\mu,\nu}}:=\|f\|_{\ell^2_\nu(\mV)}+\|\mathcal I^T f\|_{\ell^p_\mu(\mE)}\ .
\]
In other words,
$$w^{1,p,2}_{\mu,\nu}(\mV)=\left\{f\in \ell^2_\nu(\mV):\|\mathcal I^T f\|^p_{\ell^p_\mu}=\sum_{\me\in \mE} \mu(\me) |f(\me_{+})-f(\me_{-})|^p<+\infty\right\},$$
where for any (oriented!) edge $\me$ we denote by $\me_{+}$ and $\me_{-}$ the initial and terminal endpoint of $\me$, respectively, i.e., $\me=(\me_{-},\me_{+})$. Furthermore, we denote by
\[
\mathring{w}^{1,p,2}_{\mu,\nu}(\mV)
\]
the closure in $w^{1,p,2}_{\mu,\nu}(\mV)$ of the space $c_{00}(\mV)$ of finitely supported functions on $\mV$.

\begin{defi}\label{defi->thm:main}
The operator $\Lfun^{\rm N}_p$ is defined by
\[
\begin{split}
D(\Lfun^{\rm N}_p)&:=\bigg\{f\in w^{1,p,2}_{\mu,\nu}(\mV):\exists g\in \ell^2_\nu(\mV)\\
&\qquad \hbox{ s.t.\ }\sum_{\me\in \mE} \mu(\me)|({\mathcal I^T}f)(\me)|^{p-2}({\mathcal I^T}f)(\me) ({\mathcal I^T}h)(\me)=\sum_{\mv \in \mV} g(\mv)h(\mv)\nu(\mv)\;\; \forall h \in w^{1,p,2}_{\mu,\nu}(\mV)\bigg\}\ , \\
\Lfun^{\rm N}_pf &:=g\ .
\end{split}
\]
Let $p\in (1,\infty)$. If $f_0\in w^{1,p,2}_{\mu,\nu}(\mV)$, then a \emph{solution} to the Cauchy problem
\begin{equation}\label{d:Neumann}
\tag{HE$^{\rm N}_p$}
\left\{
\begin{array}{rcll}
\dot{\varphi}(t,\mv)&=&-\Lfun^{\rm N}_p \varphi(t,\mv),\qquad &t>0,\; \mv\in \mV, \\
\varphi(0,\mv)&=&f_0(\mv),&\mv\in \mV,
\end{array}
\right.\end{equation}
is a function $\varphi\in W^{1,2}(0,\infty;\ell^2_\nu(\mV))\cap L^\infty(0,\infty;w^{1,p,2}_{\mu,\nu}(\mV))$ for which
\begin{itemize}
\item $\varphi(t,\cdot)\in D(\Lfun^{\rm N}_p)$ for a.e.\ $t>0$,
\item $\dot{\varphi}(t,\mv)=-\Lfun^{\rm N}_p \varphi(t,\mv)$ is satisfied for a.e.\ $t>0$, and
\item $\varphi(0,\cdot)=f_0$.
\end{itemize}
\end{defi}

\begin{defi}\label{defi-thm:mainnu}
The operator $\Lfun^{\rm D}_p$ is defined by
\[
\begin{split}
D(\Lfun^{\rm D}_p)&=\bigg\{f\in \mathring{w}^{1,p,2}_{\mu,\nu}(\mV):\exists g\in \ell^2_\nu(\mV)\\
&\qquad \hbox{ s.t.\ }\sum_{\me\in \mE} \mu(\me)|({\mathcal I^T}f)(\me)|^{p-2}({\mathcal I^T}f)(\me) ({\mathcal I^T}h)(\me)=\sum_{\mv \in \mV} g(\mv)h(\mv)\nu(\mv)\;\; \forall h \in \mathring{w}^{1,p,2}_{\mu,\nu}(\mV)\bigg\}\ , \\
\Lfun^{\rm D}_p f&:=g\ 	.
\end{split}
\]
Let $p\in (1,\infty)$. If $f_0\in \mathring{w}^{1,p,2}_{\mu,\nu}(\mV)$, then a \emph{solution} to the Cauchy problem
\begin{equation}\label{d:Dirichlet}
\tag{HE$^{\rm D}_p$}
\left\{
\begin{array}{rcll}
\dot{\varphi}(t,\mv)&=&-\Lfun^{\rm D}_p \varphi(t,\mv),\qquad &t>0,\; \mv\in \mV, \\
\varphi(0,\mv)&=&f_0(\mv),&\mv\in \mV,
\end{array}
\right.\end{equation}
is a function $\varphi\in W^{1,2}(0,\infty;\ell^2_\nu(\mV))\cap L^\infty(0,\infty;\mathring{w}^{1,p,2}_{\mu,\nu}(\mV))$ for which
\begin{itemize}
\item $\varphi(t,\cdot)\in D(\Lfun^{\rm D}_p)$ for a.e.\ $t>0$,
\item $\dot{\varphi}(t,\mv)=-\Lfun^{\rm D}_p \varphi(t,\mv)$ is satisfied for a.e.\ $t>0$, and
\item $\varphi(0,\cdot)=f_0$.
\end{itemize}
\end{defi}

\begin{rem}\label{rem:car11}
Finding conditions on $\mG$ under which the identity
\[
\Lfun^{\rm D}_p=\Lfun^{\rm N}_p
\]
holds is interesting, since in this case the parabolic $p$-Laplace equation enjoys uniqueness of solution without need to impose further conditions at infinity.
In view of Remark~\ref{rem:bddincid} $\mathcal I$ is bounded from $\ell^p_\mu(\mE)$ to $\ell^p_{\nu}(\mV)$ for all $p\in [1,\infty]$, provided $\mG$ is uniformly locally finite. Accordingly
\[
\mathring{w}^{1,p,2}_{\mu,\nu}(\mV)=w^{1,p,2}_{\mu,\nu}(\mV)=\ell^2_\nu(\mV)
\]
and therefore $\Lfun^{\rm D}_p=\Lfun^{\rm N}_p$ if $\mG$ satisfies~\eqref{a:ULF} and additionally any of the following conditions holds:
\begin{itemize}
\item $p=2$,
\item $\nu$ is nondegenerate and $p\in (2,\infty)$
\item $\nu\in \ell^\infty(\mV)$ and $p\in [1,2)$.
\end{itemize}
Alternatively, it has been proved in~\cite{Jor08,Tor10,KelLen12} that the Laplacian is essentially self-adjoint, hence $\Lfun^{\rm D}_2=\Lfun^{\rm N}_2$ if
\begin{itemize}
\item $p=2$ and $\nu$ is nondegenerate.
\end{itemize}
\end{rem}

The following summarizes some well-posedness results proved in~\cite[\S~3]{Mug13}. We adopt the terminology introduced in Definitions~\ref{defi->thm:main}--~\ref{defi-thm:mainnu}.

\begin{theo}\label{thm:main-new}
Let $p\in (1,\infty)$. Then the following assertions hold.
\begin{enumerate}[(1)]
\item The solution to $\rm(HE^{\rm D}_p)$ (resp., to $\rm(HE^{\rm N}_p)$) is given by a strongly continuous semigroup  of nonlinear contractions on $\ell^2_\nu(\mV)$ that we denote by  $(e^{-t\Lfun^{\rm N}_{p}})_{t\ge 0}$ (resp., by $(e^{-t\Lfun^{\rm D}_{p}})_{t\ge 0}$). If in particular $f_0\in \ell^2_\nu(\mV)$, then there is a unique $\varphi\in C([0,\infty);\ell^2_\nu(\mV))$ that satisfies the initial condition, that admits for all $t>0$ a right derivative and such that its right derivative satisfies the differential equation for all $t>0$.
\item Let $f_0\in {w}^{1,p,2}_{\mu,\nu}(\mV)$ (resp., $f_0\in \mathring{w}^{1,p,2}_{\mu,\nu}(\mV)$). Then there exists a unique $\varphi\in W^{1,2}(0,\infty;\ell^2_\nu(\mV))\cap L^\infty(0,\infty;w^{1,p,2}_{\mu,\nu}(\mV))$ (resp., $\varphi\in W^{1,2}(0,\infty;\ell^2_\nu(\mV))\cap L^\infty(0,\infty;\mathring{w}^{1,p,2}_{\mu,\nu}(\mV))$) that lies pointwise in $D(\Lfun^{\rm N}_p)$ (resp., in $D(\Lfun^{\rm D}_p)$) and satisfies the differential equation for a.e.\ $t>0$ as well as the initial condition.
\end{enumerate}
\end{theo}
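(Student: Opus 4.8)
The plan is to realize both semigroups as the nonlinear semigroups generated by the subdifferentials of suitable convex energy functionals, and then to invoke the Brezis--Komura generation theorem together with its regularity theory, cf.~\cite[Cap.~3]{Bre73}. To this end I would introduce the energy functional $\Efun^{\rm N}_p:\ell^2_\nu(\mV)\to[0,\infty]$ given by
\[
\Efun^{\rm N}_p(f):=\frac{1}{p}\|\mathcal I^T f\|^p_{\ell^p_\mu(\mE)}=\frac{1}{p}\sum_{\me\in\mE}\mu(\me)|(\mathcal I^T f)(\me)|^p
\]
for $f\in w^{1,p,2}_{\mu,\nu}(\mV)$ and $\Efun^{\rm N}_p(f):=+\infty$ otherwise, together with the functional $\Efun^{\rm D}_p$ defined by the same formula but with effective domain $\mathring{w}^{1,p,2}_{\mu,\nu}(\mV)$. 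The first step is to check that each of these functionals is proper, convex and lower semicontinuous on the Hilbert space $\ell^2_\nu(\mV)$. Properness is clear, since the zero function lies in both domains; convexity follows from the convexity of $s\mapsto|s|^p/p$ and the linearity of $\mathcal I^T$; and lower semicontinuity follows from Fatou's lemma applied edgewise to the pointwise limit of $\mathcal I^T f_n$. For the Dirichlet functional one additionally uses that $\mathring{w}^{1,p,2}_{\mu,\nu}(\mV)$ is a norm-closed, hence weakly closed, subspace of the reflexive space $w^{1,p,2}_{\mu,\nu}(\mV)$, so that bounded energy forces the $\ell^2_\nu$-limit to remain in the effective domain.

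The second step is to identify the subdifferentials $\partial\Efun^{\rm N}_p$ and $\partial\Efun^{\rm D}_p$ with the operators $\Lfun^{\rm N}_p$ and $\Lfun^{\rm D}_p$ of Definitions~\ref{defi->thm:main}--\ref{defi-thm:mainnu}. Since $s\mapsto|s|^p/p$ is continuously differentiable for $p\in(1,\infty)$, with derivative $|s|^{p-2}s$, the functional $\Efun^{\rm N}_p$ is Gâteaux differentiable along every direction $h$ in its domain, with
\[
\frac{d}{d\varepsilon}\bigg|_{\varepsilon=0}\Efun^{\rm N}_p(f+\varepsilon h)=\sum_{\me\in\mE}\mu(\me)|(\mathcal I^T f)(\me)|^{p-2}(\mathcal I^T f)(\me)(\mathcal I^T h)(\me)\ .
\]
For a convex functional, $g\in\partial\Efun^{\rm N}_p(f)$ holds precisely when this linear form in $h$ is represented by $g$ through the $\ell^2_\nu$ pairing (directions $h$ outside the domain render the subgradient inequality trivially true), which is exactly the defining weak identity of $\Lfun^{\rm N}_p$; the Dirichlet case is identical upon restricting $h$ to $\mathring{w}^{1,p,2}_{\mu,\nu}(\mV)$. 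Hence $\partial\Efun^{\rm N}_p=\Lfun^{\rm N}_p$ and $\partial\Efun^{\rm D}_p=\Lfun^{\rm D}_p$, and in particular both operators are maximal monotone, being subdifferentials of proper convex lower semicontinuous functionals.

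The final step is to apply the generation and regularity theorems for subdifferentials. Because $-\partial\Efun^{\rm N}_p$ (resp.\ $-\partial\Efun^{\rm D}_p$) generates a strongly continuous semigroup of nonlinear contractions on $\overline{D(\Efun^{\rm N}_p)}$ (resp.\ $\overline{D(\Efun^{\rm D}_p)}$), and since $c_{00}(\mV)\subset w^{1,p,2}_{\mu,\nu}(\mV)\cap\mathring{w}^{1,p,2}_{\mu,\nu}(\mV)$ is dense in $\ell^2_\nu(\mV)$, both closures coincide with $\ell^2_\nu(\mV)$; this yields assertion~(1), the contraction property and strong continuity being built into the theorem and the right-differentiability at every $t>0$ being the regularizing effect of the subdifferential flow. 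For assertion~(2), Brezis's theory guarantees that for initial data $f_0$ in the effective domain the orbit lies in $W^{1,2}_{\rm loc}(0,\infty;\ell^2_\nu(\mV))$, takes values in $D(\Lfun^{\rm N}_p)$ (resp.\ $D(\Lfun^{\rm D}_p)$) for a.e.\ $t$, and solves the differential equation a.e.; uniqueness is immediate from monotonicity. The remaining $L^\infty(0,\infty;w^{1,p,2}_{\mu,\nu}(\mV))$ bound follows from the fact that $t\mapsto\Efun_p(\varphi(t))$ is nonincreasing along the flow, so that $\|\mathcal I^T\varphi(t)\|_{\ell^p_\mu}$ stays bounded by its value at $t=0$, which, combined with the $\ell^2_\nu$-contraction estimate, gives the full Sobolev bound uniformly in time.

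I expect the main obstacle to be the rigorous identification of the subdifferential in the infinite-graph setting: one must justify differentiating the infinite sum term by term and, above all, confirm that the representing element $g$ produced by the weak identity genuinely lies in $\ell^2_\nu(\mV)$ rather than merely in a dual Sobolev space, which is precisely the content encoded in the domains $D(\Lfun^{\rm N}_p)$ and $D(\Lfun^{\rm D}_p)$. The lower semicontinuity of the Dirichlet functional, which requires the weak-closedness argument in the reflexive Sobolev space, is the other point deserving care.
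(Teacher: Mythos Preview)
Your approach is correct and is precisely the one taken in the reference~\cite[\S~3]{Mug13} that the paper cites for this theorem (the paper itself does not give an independent proof, only summarizes the result). The identification $\partial\Efun^{\rm N}_p=\Lfun^{\rm N}_p$, $\partial\Efun^{\rm D}_p=\Lfun^{\rm D}_p$ via Gâteaux differentiability of the $p$-energy, followed by the Brezis--Komura machinery from~\cite[Cap.~3]{Bre73}, is exactly what is carried out there; the density of $c_{00}(\mV)$ in $\ell^2_\nu(\mV)$ and the reflexivity argument for the Dirichlet functional are the right ingredients.

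One small refinement: in assertion~(2) the statement asks for $\varphi\in W^{1,2}(0,\infty;\ell^2_\nu(\mV))$ globally, not just $W^{1,2}_{\rm loc}$. This follows from the standard energy identity $\int_0^T\|\dot\varphi(t)\|^2_{\ell^2_\nu}\,dt=\Efun_p(f_0)-\Efun_p(\varphi(T))\le\Efun_p(f_0)$ for subdifferential flows, so letting $T\to\infty$ gives the global bound; you should make this explicit rather than stopping at the local statement.
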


For a node subset $\mW\subset \mV$, the \emph{subgraph $\mG[\mW]$ of $\mG$ induced by $\mW$} is by definition the graph whose node set is $\mW$ and whose edge set consists of all edges in $\mE$ whose endpoints are both in $\mW$. If $\mG$ is a \emph{weighted} graph with edge weights $\mu$ and node weights $\nu$, then each edge and each node in the induced subgraph keeps the same weight it has in $\mG$.
\begin{defi}
Let $\mG$ be a graph. A family of graphs $(\mG_n)_{n\in \mathbb N}$ is called \emph{growing} if $\mG_n$ is an induced subgraph of $\mG_m$ for all $n,m\in \mathbb N$ with $n\le m$. It is said to \emph{exhaust $\mG$} if
\begin{itemize}
\item $\mG_n$ is an induced subgraph of $\mG$  for all $n\in \mathbb N$  and
\item $\bigcup_{n\in \mathbb N}\mV_n =\mV$.
\end{itemize}
\end{defi}

Then~\cite[Thm.~3.4]{Mug13} also implies the following.
\begin{theo}\label{thm:main-2}
Let $p\in (1,\infty)$. Then the following assertions hold.
\begin{enumerate}[(1)]
\item For all $f_0\in \mathring{w}^{1,p,2}_{\mu,\nu}(\mV)$ there is a growing family of finite graphs $(\mG_n)_{n\in \mathbb N}$ that exhausts $\mG$ and such that the sequence of solutions $(\varphi_n)_{n\in \mathbb N}$ to the Cauchy problem
\begin{equation}
\tag{HE$^{(n)}_p$}
\left\{
\begin{array}{rcll}
\dot{\varphi}_n(t,\mv)&=&-\Lfun^{(n)}_p \varphi_n(t,\mv),\qquad &t\ge 0,\; \mv\in \mV_n, \\
\varphi_n(t,\mv)&=&0, &t\ge 0,\; \mv \in \mV\setminus\mV_n, \\
\varphi_n(0,\mv)&=&f_0(\mv), &\mv\in \mV_n,
\end{array}
\right.\end{equation}
where $\mV_n$ denotes the node set of $\mG_n$ and for all $h\in c_{00}(\mV)$	
\[
\begin{split}
\Lfun^{(n)}_p h(\mv)&:=\left\{
\begin{array}{ll}
\frac{1}{\nu(\mv)}\sum\limits_{\substack{\mw\in \mV_n\\ \mw \sim \mv}} \mu(\mv,\mw)\left|h(\mv)-h(\mw)\right|^{p-2}\left(h(\mv)-h(\mw)\right)\\
 \qquad + \frac{1}{\nu(\mv)}\left|h(\mv)\right|^{p-2}h(\mv)\sum\limits_{\substack{\mw\not\in \mV_n\\ \mw \sim \mv}} \mu(\mv,\mw),\qquad & \mv \in \mV_n, \\
 \\
-\frac{1}{\nu(\mv)}\left|h(\mv)\right|^{p-2}\left(h(\mv)\right)\sum\limits_{\substack{\mw\in \mV_n\\ \mw \sim \mv}} \mu(\mv,\mw),&\mv\not\in \mV_n,
\end{array}
\right.
\end{split}
\] converges to the unique solution $\varphi$ of $\rm(HE^{\rm D}_p)$, weakly in $W^{1,2}(0,\infty;\ell^2_\nu(\mV))$ and weakly$^*$ in $L^\infty(0,\infty;\mathring{w}^{1,p,2}_{\mu,\nu}(\mV))$.
\item If $f_0\in \ell^q_\nu(\mV)$ for some $q\in [1,2)$, then $(\varphi_n)_{n\in \mathbb N}$ converges to $\varphi$ also weakly$^*$ in $L^\infty(0,\infty;\ell^q_\nu(\mV))$.
\end{enumerate}
\end{theo}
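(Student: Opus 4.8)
The plan is to read both families of evolution equations as gradient flows of convex energies on the fixed Hilbert space $H:=\ell^2_\nu(\mV)$ and to derive the convergence $\varphi_n\to\varphi$ from the convergence of the energies, which is the substance of \cite[Thm.~3.4]{Mug13}. Writing $\Efun^{\rm D}_p(f):=\frac1p\|\mathcal I^T f\|_{\ell^p_\mu(\mE)}^p$ for $f\in\mathring w^{1,p,2}_{\mu,\nu}(\mV)$ and $+\infty$ otherwise, the functional $\Efun^{\rm D}_p$ is proper, convex and lower semicontinuous, its $H$-subdifferential is $\Lfun^{\rm D}_p=\partial\Efun^{\rm D}_p$ (this is exactly the weak identity defining $D(\Lfun^{\rm D}_p)$ in Definition~\ref{defi-thm:mainnu}), and $\varphi$ is its gradient flow issued from $f_0$. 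A direct computation from the definition of $\Lfun^{(n)}_p$ shows that the zero-extension to $\mV$ of $\varphi_n$ is the gradient flow of $\Efun^{(n)}_p:=\Efun^{\rm D}_p+I_{K_n}$, where $I_{K_n}$ is the indicator of the subspace $K_n:=\{f\in H:\operatorname{supp}f\subseteq\mV_n\}$: indeed, for $f$ supported in $\mV_n$ the full gradient sum $\|\mathcal I^T f\|_{\ell^p_\mu}^p$ collects precisely the edges interior to $\mG_n$ together with the edges of $\partial\mV_n$ carried with exterior value $0$, and the $H$-subdifferential of $\Efun^{\rm D}_p+I_{K_n}$ at such an $f$ is the restriction to $\mV_n$ of the $p$-Laplacian, which reproduces the first branch of $\Lfun^{(n)}_p$ (the complementary branch being inactive on the constraint set $K_n$).

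First I would record the uniform a priori estimates underlying the claimed convergence modes. Testing the equation against $\varphi_n$ and using the $p$-homogeneity identity $\langle\Lfun^{(n)}_p f,f\rangle_{\ell^2_\nu}=p\,\Efun^{(n)}_p(f)\ge0$ gives $\|\varphi_n(t)\|_{\ell^2_\nu}\le\|f_0\|_{\ell^2_\nu}$ uniformly in $t,n$, while the gradient-flow energy identity $\frac{d}{dt}\Efun^{(n)}_p(\varphi_n)=-\|\dot\varphi_n\|_{\ell^2_\nu}^2$ integrates to $\int_0^\infty\|\dot\varphi_n\|_{\ell^2_\nu}^2\,dt\le\Efun^{(n)}_p(u_{0,n})$ and $\sup_t\Efun^{(n)}_p(\varphi_n(t))\le\Efun^{(n)}_p(u_{0,n})$, where $u_{0,n}$ denotes the restriction of $f_0$ to $\mV_n$ extended by zero. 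The main obstacle is exactly to bound this right-hand side uniformly in $n$: one has $\Efun^{(n)}_p(u_{0,n})=\frac1p\|\mathcal I^T u_{0,n}\|_{\ell^p_\mu}^p$ equal to the interior gradient energy of $f_0$ on $\mG_n$ plus the boundary contribution $\frac1p\sum_{\me\in\partial\mV_n}\mu(\me)|f_0(\mv_\me)|^p$, with $\mv_\me\in\mV_n$ the interior endpoint, and truncation is \emph{not} a bounded operation on $w^{1,p,2}_{\mu,\nu}$, so one must choose the exhaustion adapted to $f_0$ so that this boundary energy stays bounded and in fact tends to $0$. For finitely supported $f_0$ this is immediate — take $\mV_1\supseteq\operatorname{supp}f_0$, whence $u_{0,n}=f_0$ and $\Efun^{(n)}_p(u_{0,n})=\Efun^{\rm D}_p(f_0)$ for every $n$ — and the general case $f_0\in\mathring w^{1,p,2}_{\mu,\nu}(\mV)$ is reached by a diagonal argument against $c_{00}(\mV)$-approximants converging in the graph-Sobolev norm; this is the technical heart of \cite[Thm.~3.4]{Mug13}. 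Granting it, $u_{0,n}\to f_0$ strongly in $\ell^2_\nu$ and $\sup_n\Efun^{(n)}_p(u_{0,n})<\infty$, so $(\varphi_n)$ is bounded in $W^{1,2}(0,\infty;\ell^2_\nu)$ and in $L^\infty(0,\infty;\mathring w^{1,p,2}_{\mu,\nu})$, and a subsequence converges weakly, resp.\ weakly$^*$, to some $\psi$.

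Next I would identify $\psi$ with $\varphi$, where the pointwise-in-space structure of the graph makes the passage to the nonlinear limit comparatively soft. From the $\ell^2_\nu$-bound on $\varphi_n$ and the $L^2$-bound on $\dot\varphi_n$, for each fixed $\mv$ the scalar functions $\varphi_n(\cdot,\mv)$ are bounded in $H^1(0,T)$, hence precompact in $C([0,T])$; a diagonal extraction over the countable node set yields $\varphi_n(t,\mv)\to\psi(t,\mv)$ for every $\mv$, locally uniformly in $t$, and therefore $(\mathcal I^T\varphi_n)(t,\me)\to(\mathcal I^T\psi)(t,\me)$ for every edge. Testing the equation against any $h\in c_{00}(\mV)$ involves only finitely many edges, on which this convergence is uniform and uniformly bounded, so dominated convergence passes the flux $|\mathcal I^T\varphi_n|^{p-2}\mathcal I^T\varphi_n$ to the limit with no need of a monotonicity (Minty) argument; since the limiting flux lies in $\ell^{p'}_\mu(\mE)$ by the energy bound and $c_{00}(\mV)$ is dense in $\mathring w^{1,p,2}_{\mu,\nu}(\mV)$, the weak identity extends to all admissible test functions. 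Hence $\psi(t)\in D(\Lfun^{\rm D}_p)$ with $\dot\psi=-\Lfun^{\rm D}_p\psi$ for a.e.\ $t$ and $\psi(0,\mv)=\lim_n u_{0,n}(\mv)=f_0(\mv)$, so $\psi$ solves $\rm(HE^{\rm D}_p)$; by the uniqueness in Theorem~\ref{thm:main-new} we get $\psi=\varphi$, and since the limit is independent of the subsequence the whole sequence converges, which proves (1).

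Finally, for (2) I would add the contractivity estimate. The gradient flow of the $p$-energy is an $\ell^q_\nu$-contraction for every $q\in[1,\infty]$ (the underlying nonlinearity being completely accretive), so $\|\varphi_n(t)\|_{\ell^q_\nu}\le\|u_{0,n}\|_{\ell^q_\nu}\le\|f_0\|_{\ell^q_\nu}$ uniformly in $t$ and $n$ whenever $f_0\in\ell^q_\nu(\mV)$. For $q\in(1,2)$ this bounds $(\varphi_n)$ in $L^\infty(0,\infty;\ell^q_\nu)=\big(L^1(0,\infty;\ell^{q'}_\nu)\big)^*$, and for $q=1$ in $L^\infty(0,\infty;\ell^1_\nu)$, which is the dual of $L^1(0,\infty;c_{0,\nu})$ for the predual $c_{0,\nu}$ of $\ell^1_\nu$; in either case one gets weak$^*$ precompactness. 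The weak$^*$ limit must coincide with $\varphi$, since it already agrees with $\psi=\varphi$ in the strong $\ell^2_\nu$-sense established in (1) on the dense set where the two topologies compare. This yields the asserted weak$^*$ convergence in $L^\infty(0,\infty;\ell^q_\nu)$ and completes the proof.
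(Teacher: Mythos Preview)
The paper does not supply a proof of this theorem at all: it is merely stated as a consequence of \cite[Thm.~3.4]{Mug13}, with the sentence ``Then~\cite[Thm.~3.4]{Mug13} also implies the following'' serving as the entire justification. Your proposal therefore goes well beyond what the paper itself provides, and is in effect a reconstruction of the argument behind the cited external result.

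As a reconstruction, your outline is essentially sound: the identification of $\rm(HE^{(n)}_p)$ as the gradient flow of $\Efun^{\rm D}_p+I_{K_n}$ is correct (the normal cone $K_n^\perp$ produces exactly the restriction to $\mV_n$ in the evolution), the energy estimates are the standard ones, and the pointwise-in-node compactness argument for identifying the limit exploits the discrete structure in the right way. You also correctly flag the one genuinely delicate point, namely that truncation is not continuous on $w^{1,p,2}_{\mu,\nu}$ and hence the exhaustion must be chosen adapted to $f_0$ so that the boundary energy $\sum_{\me\in\partial\mV_n}\mu(\me)|f_0(\mv_\me)|^p$ stays bounded; you defer this to \cite[Thm.~3.4]{Mug13}, which is exactly what the paper does. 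The treatment of~(2) via $\ell^q_\nu$-contractivity and the predual identification for $q=1$ is also correct.
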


Let us recall the following result, which is~\cite[Cor.~3.11]{Mug13}.
\begin{cor}\label{ralphwell-local}
Let $p\in (1,\infty)$. Both semigroups $(e^{-t\Lfun^{\rm N}_p})_{t\ge 0}$ and $(e^{-t\Lfun^{\rm D}_p})_{t\ge 0}$ extrapolate to a family of nonlinear semigroups on $\ell^q_{\nu}(\mV)$ for all $q\in [1,\infty]$.
\end{cor}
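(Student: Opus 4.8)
The plan is to realise both $(e^{-t\Lfun^{\rm N}_p})_{t\ge 0}$ and $(e^{-t\Lfun^{\rm D}_p})_{t\ge 0}$ as the nonlinear semigroups generated in $\ell^2_\nu(\mV)$ by the subdifferentials of the convex, proper, lower semicontinuous energy functionals
\[
\Efun^{\rm N}_p(f):=\tfrac1p\|\mathcal I^T f\|^p_{\ell^p_\mu(\mE)}=\tfrac1p\sum_{\me\in\mE}\mu(\me)\,|(\mathcal I^T f)(\me)|^p,\qquad f\in w^{1,p,2}_{\mu,\nu}(\mV),
\]
extended by $+\infty$ elsewhere on $\ell^2_\nu(\mV)$, and $\Efun^{\rm D}_p$ defined in the same way but with domain $\mathring{w}^{1,p,2}_{\mu,\nu}(\mV)$; the identities $\partial\Efun^{\rm N}_p=\Lfun^{\rm N}_p$ and $\partial\Efun^{\rm D}_p=\Lfun^{\rm D}_p$ are exactly what underlies the construction summarised in Theorem~\ref{thm:main-new}. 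Once this is in place, the corollary reduces to the statement that the semigroup associated with such a functional is nonexpansive in each $\ell^q_\nu$-norm, and the standard route goes through the resolvents $J_\lambda:=(I+\lambda\partial\Efun)^{-1}$, which are single-valued and defined on all of $\ell^2_\nu(\mV)$ by the variational principle $J_\lambda f=\arg\min_w\big(\Efun(w)+\tfrac1{2\lambda}\|w-f\|^2_{\ell^2_\nu(\mV)}\big)$.

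The key structural property is that $\Efun^{\rm N}_p$ and $\Efun^{\rm D}_p$ are \emph{compatible with normal contractions}, i.e.\ $\Efun(F\circ f)\le\Efun(f)$ for every $F:\R\to\R$ with $F(0)=0$ and $|F(s)-F(t)|\le|s-t|$ and every $f$ in the respective domain. This is immediate from
\[
|(\mathcal I^T(F\circ f))(\me)|=|F(f(\me_+))-F(f(\me_-))|\le|f(\me_+)-f(\me_-)|=|(\mathcal I^T f)(\me)|,\qquad \me\in\mE,
\]
which also shows $F\circ f$ stays in $w^{1,p,2}_{\mu,\nu}(\mV)$ (for $\Efun^{\rm D}_p$ one additionally uses that $F$ maps $c_{00}(\mV)$ into $c_{00}(\mV)$ and then passes to the closure). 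Plugging this into the abstract theory of completely accretive operators, resp.\ of order-preserving nonexpansive nonlinear semigroups generated by such ``nonlinear Dirichlet forms'' (Bénilan--Crandall; Cipriani--Grillo), one obtains that each $J_\lambda$ is order preserving and satisfies $\|J_\lambda f-J_\lambda g\|_{\ell^q_\nu(\mV)}\le\|f-g\|_{\ell^q_\nu(\mV)}$ for all $q\in[1,\infty]$ and all $f,g\in\ell^2_\nu(\mV)\cap\ell^q_\nu(\mV)$. Concretely, one tests the Euler--Lagrange inequalities characterising $J_\lambda f$ and $J_\lambda g$ against variations built from the truncations $s\mapsto(s-k)^+$ and uses that $\Efun$ does not increase under them; this yields the $\ell^q_\nu$-contraction for $q<\infty$, and one then lets $q\to\infty$.

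Finally I would pass from the resolvents to the semigroup by the Crandall--Liggett exponential formula $e^{-t\partial\Efun}f=\lim_{n\to\infty}(I+\tfrac tn\partial\Efun)^{-n}f$: iterating the previous estimate, $e^{-t\partial\Efun}$ is $\ell^q_\nu$-nonexpansive on $\ell^2_\nu(\mV)\cap\ell^q_\nu(\mV)$, a set that is dense in $\ell^q_\nu(\mV)$ for $q<\infty$, so the semigroup extends by uniform continuity to a strongly continuous semigroup of nonexpansive maps on $\ell^q_\nu(\mV)$, the semigroup law being inherited in the limit. The endpoint $q=\infty$ has to be treated separately, because $\ell^2_\nu(\mV)\cap\ell^\infty_\nu(\mV)$ is not dense in $\ell^\infty_\nu(\mV)$; there one defines the extension on $\ell^\infty_\nu$-data either by monotone/dominated approximation from $\ell^q_\nu$-data with $q<\infty$, using order preservation, or by a weak$^*$ limit. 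I expect this endpoint bookkeeping — together with checking that the completely-accretive machinery, usually phrased over $L^1$ of a $\sigma$-finite measure space, transfers verbatim to the nonlinear functionals $\Efun^{\rm N/D}_p$ on the discrete measure space $(\mV,\nu)$ — to be the only genuine obstacle; the heart of the argument is the one-line oscillation estimate displayed above.
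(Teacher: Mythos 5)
Your overall strategy is sound, but it is worth recording that the paper does not prove this corollary at all: it simply quotes \cite[Cor.~3.11]{Mug13}, where the extrapolation is obtained ``by interpolation methods'' (order preservation plus $\ell^1$- and $\ell^\infty$-contractivity of the semigroup, established via invariance criteria for convex sets in the spirit of Barth\'elemy/Ouhabaz, followed by nonlinear interpolation). What you propose is essentially a self-contained reconstruction of the machinery behind that citation, packaged through complete accretivity of $\partial\Efun^{\rm N}_p$ and $\partial\Efun^{\rm D}_p$ and the Crandall--Liggett formula instead of an explicit interpolation theorem; both routes are legitimate, and yours has the advantage of producing the $\ell^q_\nu$-contractivity of the resolvents for \emph{all} $q$ in one stroke. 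Two points in your sketch need tightening, though. First, the displayed ``one-line oscillation estimate'' $\Efun(F\circ f)\le\Efun(f)$ for a single function is only the Markov-type property; by itself it does not yield $\|J_\lambda f-J_\lambda g\|_{\ell^q_\nu}\le\|f-g\|_{\ell^q_\nu}$. For that you need the two-function condition (e.g.\ the Cipriani--Grillo inequality
\begin{equation*}
\Efun\Bigl(\tfrac{f+g}{2}+\tfrac12 F(f-g)\Bigr)+\Efun\Bigl(\tfrac{f+g}{2}-\tfrac12 F(f-g)\Bigr)\le \Efun(f)+\Efun(g)
\end{equation*}
for every normal contraction $F$), which in the present discrete setting does hold edgewise, since for the convex function $t\mapsto |t|^p$ the corresponding two-point inequality is a majorization statement; alternatively one carries out in detail the truncation test on $J_\lambda f-J_\lambda g$ that you only gesture at. Second, for the Dirichlet functional the assertion that $F\circ f\in\mathring{w}^{1,p,2}_{\mu,\nu}(\mV)$ does not follow by naively ``passing to the closure'': for $f_n\in c_{00}(\mV)$ with $f_n\to f$ one should note that $(F\circ f_n)_n$ is bounded in $w^{1,p,2}_{\mu,\nu}(\mV)$, extract a weakly convergent subsequence, identify the limit with $F\circ f$, and use that the closed subspace $\mathring{w}^{1,p,2}_{\mu,\nu}(\mV)$ is weakly closed together with weak lower semicontinuity of the energy (this is the ideal-type property invoked in Lemma~\ref{lem:domin} via \cite[Lemma~3.11]{Mug14}). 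With these repairs, and with the caveat you already make about the $q=\infty$ endpoint (where only a semigroup of contractions, not strong continuity, can be expected), your argument gives the statement.
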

This result is proved by interpolation methods. Accordingly, it is known that for each $q\in [1,\infty]$ there exists an operator acting on $\ell^q_\nu(\mV)$ that agrees with $e^{-t\Lfun^{\rm N}_p}f$ whenever $f\in \ell^2_\nu(\mV)\cap \ell^q_\nu(\mV)$, and that these operators form a family $(e^{-t\Lfun^{\rm N}_{p,q}})_{t\ge 0}$, which is in fact a strongly-continuous semigroup of nonlinear contractions on $\ell^q_\nu(\mV)$. (Same holds for $(e^{-t\Lfun^{\rm D}_p})_{t\ge 0}$.) However, it is a priori not clear which operators generate such semigroups -- this is a problem that generally appears even whenever \emph{linear} semigroups are defined by extrapolation/interpolation. However, we are able to describe their generators as in Theorem~\ref{t:generator}.

\begin{proof}[Proof of Theorem~\ref{t:generator}]
Consider a counting of $\mV$, say $\mV=\{\mv_1,\mv_2,\ldots\}$, and denote by $\mathbf 1_m$ the characteristic function of the finite set $\{\mv_1,\ldots,\mv_m\}$. Take $f\in \ell^q_\nu(\mV)$ and let $f_n:=f\cdot {\mathbf 1}_n$, so that $(f_n)_{n\in \mathbb N}\subset c_{00}(\mV)$. If $f\ge 0$, then $0\le f_n\le f_{n+1}\le f$ for all $n\in \mathbb N$ and $f=\ell^q_\nu-\lim_{n\to \infty}f_n$. Since $f_n\in \ell^2_\nu(\mV)\cap \ell^q_\nu(\mV)$ for all $n\in \mathbb N$,
\[
e^{-t\Lfun^{\rm N}_{p,q}}f_n=e^{-t\Lfun^{\rm N}_{p}}f_n\ ,
\]
and by the definition of $e^{-t\Lfun^{\rm N}_{p}}$ the mapping $t\mapsto e^{-t\Lfun^{\rm N}_p}f_n$ satisfies the wished differential equation. Because $(e^{-t\Lfun^{\rm N}_p})_{t\ge 0}$ is order preserving,
\[
0\le e^{-t\Lfun^{\rm N}_{p,q}}f_n=e^{-t\Lfun^{\rm N}_{p}}f_n\le e^{-t\Lfun^{\rm N}_{p}}f_{n+1}=e^{-t\Lfun^{\rm N}_{p,q}}f_{n+1}.
\]
By Beppo Levi's monotone convergence theorem their limit $e^{-t\Lfun^{\rm N}_{p,q}}f$ (which is unique and hence has to agree with $e^{-t\Lfun^{\rm N}_{p,q}}f$) belongs to $\ell^q_\nu(\mV)$. Because $t\mapsto e^{-t\Lfun^{\rm N}_{p}} f_n$ satisfies the differential equation for each node in $\{\mv_1,\cdots,\mv_n\}$, so does the limit $t\mapsto e^{-t\Lfun^{\rm N}_{p,q}} f$ for each node in $\mV$.
The case of general $f$ follows again approximating $f$ by its truncations with finite support, using this time Lebesgue's dominated convergence theorem to show that the limit belongs to $\ell^q_\nu(\mV)$.

The Dirichlet case follows from the same argument.
\end{proof}

\section{Higher order time regularity and nonlinear semigroups}\label{sec:3}
In the previous section we have recalled some results on the  existence of solutions. In this section we will show that these solutions are in fact strong.
\subsection{Pointwise time regularity.}
Theorem~\ref{thm:main-new} states existence and uniqueness solutions to the parabolic $p$-Laplace equations, both under Dirichlet and Neumann condition and for all $p\in (1,\infty)$, in a suitably week sense. In the theory of partial differential equations it is often routine to obtain weak solutions; the main difficulty consists in showing that these weak solutions in fact satisfy the relevant PDE \emph{pointwise} rather than merely in the integrated sense: this is usually done by some tricky regularity arguments.

In the following we take advantage of the discreteness of the underlying spaces and are thus able to show some new regularity results for weak solutions without employing any deep tools in analysis. In this way, we can prove that the constructed weak solution $u(\cdot,\mv)$ belongs to the Hölder space $C^{\lfloor p\rfloor,p-\lfloor p\rfloor}([0,\infty))$ for any fixed $\mv\in \mV$ and it pointwise satisfies the parabolic $p$-Laplace equation \eqref{i:eq-parabolic} for any $t>0$ (as opposed to: for a.e.\ $t>0$, as yielded by Theorem~\ref{thm:main-new}) where
\begin{equation}\label{e:p laplacian}\Lfun_p u(\mv)=-\frac{1}{\nu(\mv)}\sum_{\substack{\mw\in\mV \\ \mw\sim\mv}}\mu_{\mw\mv}|u(\mw)-u(\mv)|^{p-2}(u(\mw)-u(\mv))\ .\end{equation}

\begin{proof}[Proof of Theorem~\ref{resuc2}] It follows from Theorem~\ref{thm:main-new} that for any fixed $\mv \in \mV$ the equation
\[
D_t^{+}u(\cdot,\mv)=-\Lfun_p u(\cdot,\mv)\in C([0,\infty))	
\]
 holds pointwise.

Now, recall that if a function $f:[0,\infty)\to X$ is continuous and right differentiable at each point, with the right derivative function being continuous, then $f$ is differentiable, see e.g.~\cite[Cor.~2.1.2]{Paz83} if $X$ is a finite dimensional vector space, or~\cite[Lemma~8.9]{HunMacMey13} if $X$ is a general Banach space.
Accordingly, $u(\cdot,\mv)\in C^1([0,\infty))$ for any $\mv\in \mV$ and in fact one has
\begin{equation}\label{e:first derivative}
D_t u(t,\mv)=-\Lfun_p u(t,\mv)=\frac{1}{\nu(\mv)}\sum_{\mw\sim \mv}\mu_{\mv\mw} g_p\left(u(t,\mw)-u(t,\mv)\right)=:U(t,\mv)\ ,
\end{equation}
where
\begin{equation}\label{e:gp}
g_p(\alpha):=|\alpha|^{p-2}\alpha,\quad \alpha\in\R.
\end{equation}
It can be checked directly that
\[g_p(\cdot)\in\left\{\begin{array}{ll}
C^{\infty}(\R),& \rm{if}\ p\ \rm{is\ an\ even\ integer}, \\
C^{p-2,1}(\R),& \rm{if}\ p\ \rm{is\ an\ odd\ integer}, \\
C^{\lfloor p-1\rfloor, p-\lfloor p\rfloor}(\R),& \rm{otherwise}. \\
\end{array} \right.\]

With the previous observation on the regularity of $g_p$ and the fact $u(\cdot,\mv)\in C^1([0,\infty))$, the theorem then follows from standard bootstrap arguments.
\end{proof}

\subsection{Time regularity in norm.}
We want to show that for any $2\leq p<\infty$ there exists a semigroup associated with the $p$-Laplacian in $\ell^q_{\nu}, q\in [1,\infty)$, which is smooth in time with respect to the $\ell^q_{\nu}$ norm in space, i.e., for any $u_0\in \ell^q_{\nu}$ we have $u(t,\cdot)\in \ell^q_{\nu}$ and $u\in C^1([0,\infty); \ell^q_{\nu})$.

We need some lemmata. The following is a direct consequence of~\cite[Cor.~3.11]{Mug13}.
\begin{lemma}\label{l:monotone} Let $\varphi$ be the solution to the parabolic $p$-Laplace equation with Dirichlet or Neumann conditions and initial data $f_0\in \ell^q_{\nu}$, $q\in [1,\infty]$. Then $\varphi(t,\cdot)\in \ell^q_{\nu}$ and
\begin{equation}\label{e:monotone}\|\varphi(t,\cdot)\|_{\ell^q_{\nu}}\leq \|f_0\|_{\ell^q_{\nu}}.\end{equation}
\end{lemma}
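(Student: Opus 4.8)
The statement to prove is Lemma~\ref{l:monotone}: for the solution $\varphi$ of $\rm(HE^{\rm D}_p)$ or $\rm(HE^{\rm N}_p)$ with initial data $f_0 \in \ell^q_\nu$, $q \in [1,\infty]$, we have $\varphi(t,\cdot) \in \ell^q_\nu$ with $\|\varphi(t,\cdot)\|_{\ell^q_\nu} \le \|f_0\|_{\ell^q_\nu}$.

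\begin{proof}[Proof of Lemma~\ref{l:monotone}]
The plan is to reduce the assertion entirely to the extrapolation result recalled in Corollary~\ref{ralphwell-local} (that is, \cite[Cor.~3.11]{Mug13}) together with the contractivity of the extrapolated semigroups. First, I would treat the case $f_0 \in \ell^2_\nu(\mV) \cap \ell^q_\nu(\mV)$: by construction of the extrapolated semigroup $(e^{-t\Lfun^{\rm N}_{p,q}})_{t\ge 0}$ (resp.\ $(e^{-t\Lfun^{\rm D}_{p,q}})_{t\ge 0}$), which consists of \emph{nonlinear contractions} on $\ell^q_\nu(\mV)$ and agrees with $e^{-t\Lfun^{\rm N}_p}$ (resp.\ $e^{-t\Lfun^{\rm D}_p}$) on $\ell^2_\nu \cap \ell^q_\nu$, the solution $\varphi(t,\cdot) = e^{-t\Lfun^{\rm N}_{p}}f_0$ lies in $\ell^q_\nu(\mV)$ and, applying the contraction estimate with the fixed point $0$ (note $e^{-t\Lfun^{\rm N}_p}0 = 0$ since the zero function is a stationary solution), one gets
\[
\|\varphi(t,\cdot)\|_{\ell^q_\nu} = \|e^{-t\Lfun^{\rm N}_{p,q}}f_0 - e^{-t\Lfun^{\rm N}_{p,q}}0\|_{\ell^q_\nu} \le \|f_0 - 0\|_{\ell^q_\nu} = \|f_0\|_{\ell^q_\nu}\ .
\]
The Dirichlet case is identical with $\Lfun^{\rm N}_p$ replaced by $\Lfun^{\rm D}_p$.

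For general $f_0 \in \ell^q_\nu(\mV)$ (which need not lie in $\ell^2_\nu(\mV)$, this being the only genuine point when $q > 2$ on a graph of infinite measure), I would argue by density and strong continuity: the truncations $f_0 \cdot \mathbf 1_m$ (with $\mathbf 1_m$ the characteristic function of a finite exhausting family of node sets, as in the proof of Theorem~\ref{t:generator}) lie in $c_{00}(\mV) \subset \ell^2_\nu \cap \ell^q_\nu$ and converge to $f_0$ in $\ell^q_\nu(\mV)$ when $q < \infty$. Applying the previous step to each truncation and passing to the limit using strong continuity and contractivity of $(e^{-t\Lfun^{\rm N}_{p,q}})_{t\ge 0}$ on $\ell^q_\nu(\mV)$ (again from Corollary~\ref{ralphwell-local}) yields $\varphi(t,\cdot) = e^{-t\Lfun^{\rm N}_{p,q}}f_0 \in \ell^q_\nu(\mV)$ with the bound~\eqref{e:monotone} preserved in the limit. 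For $q = \infty$, where $c_{00}(\mV)$ is not dense, one instead uses that the $\ell^\infty_\nu$-estimate is inherited from the $\ell^q_\nu$-estimates for finite $q$ by taking $q \to \infty$ (or invokes the order-preserving property and compares pointwise against the constant supersolution $\|f_0\|_{\ell^\infty_\nu}$), so that $\|\varphi(t,\cdot)\|_{\ell^\infty_\nu} \le \|f_0\|_{\ell^\infty_\nu}$ as well.

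The only mildly delicate point is the passage to the limit in the last paragraph: one must be sure that the object produced by extrapolating from truncations is the \emph{same} $\varphi$ as the solution provided by Theorems~\ref{thm:main-new}--\ref{thm:main-2}, i.e.\ that the extrapolated semigroup genuinely extends the $\ell^2_\nu$-semigroup and that its action on $f_0 \in \ell^q_\nu$ does not depend on the approximating sequence. This is exactly the content of Corollary~\ref{ralphwell-local} and the surrounding discussion, so no new work is required; everything else is a one-line application of nonlinear contractivity with the stationary solution $0$ as reference point. I therefore do not expect any serious obstacle — the lemma is, as stated, a direct consequence of \cite[Cor.~3.11]{Mug13}.
\end{proof}
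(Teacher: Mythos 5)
Your proposal is correct and follows essentially the same route as the paper, which offers no written proof at all but simply declares the lemma ``a direct consequence of [Mug13, Cor.~3.11]'' (recalled as Corollary~\ref{ralphwell-local}); your argument --- contractivity of the extrapolated semigroup applied against the stationary solution $0$, plus density/truncation for general $f_0\in\ell^q_\nu$ --- is precisely the natural unpacking of that citation.
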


\begin{lemma}\label{l:normalized operator}
Let $\mG$ be uniformly locally finite. Then for any $p\in(1,\infty),$ $q\in[1,\infty]$ and any $u\in \ell^q_{\nu},$ we have $\Lfun_p u\in \ell^{\frac{q}{p-1}}_{\nu}$ and \begin{equation}\label{e:normalized} \|\Lfun_p u\|_{\ell^{\frac{q}{p-1}}_{\nu}}\leq C \|u\|_{\ell^q_{\nu}}^{p-1}\end{equation} where $C=C(K,p,q)$ and $K:=\inf_{\mv\in V}\frac{\nu(v)}{\mu(v)}>0$.
Moreover, if $p\geq 2$ and $\inf_{\mv\in V}\nu(v)>0,$ then $\Lfun_p$ maps $\ell^q_{\nu}(\mV)$ to $\ell^q_{\nu}(\mV)$ for any $q\in [1,\infty]$.
\end{lemma}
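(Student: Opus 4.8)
The plan is to estimate $\Lfun_p u$ pointwise and then sum. First I would fix $u\in\ell^q_\nu(\mV)$ and write, using \eqref{e:p laplacian},
\[
|\Lfun_p u(\mv)|\le \frac{1}{\nu(\mv)}\sum_{\mw\sim\mv}\mu_{\mv\mw}\,|u(\mw)-u(\mv)|^{p-1}\ ,
\]
and bound $|u(\mw)-u(\mv)|^{p-1}\le 2^{p-1}\bigl(|u(\mw)|^{p-1}+|u(\mv)|^{p-1}\bigr)$ (or $\le |u(\mw)|^{p-1}+|u(\mv)|^{p-1}$ after the usual $(a+b)^{p-1}$ estimate with a $p$-dependent constant). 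This gives
\[
|\Lfun_p u(\mv)|\le \frac{2^{p-1}}{\nu(\mv)}\Bigl(\mu(\mv)|u(\mv)|^{p-1}+\sum_{\mw\sim\mv}\mu_{\mv\mw}|u(\mw)|^{p-1}\Bigr)\ .
\]
Now invoke uniform local finiteness \eqref{a:ULF}: $\mu(\mv)\le K^{-1}\nu(\mv)$ with $K:=\inf_\mv \nu(\mv)/\mu(\mv)>0$, so the diagonal term is controlled by $K^{-1}|u(\mv)|^{p-1}$.

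Second, I would raise to the power $r:=q/(p-1)$ and sum against $\nu$. For the diagonal term this is immediate: $\sum_\mv \nu(\mv)\bigl(K^{-1}|u(\mv)|^{p-1}\bigr)^{r}=K^{-r}\|u\|_{\ell^q_\nu}^{q}$, and $\|\cdot\|_{\ell^q_\nu}^{q}=\bigl(\|\cdot\|_{\ell^q_\nu}^{p-1}\bigr)^{r}$, which is exactly the claimed bound $\|\Lfun_pu\|_{\ell^{q/(p-1)}_\nu}\le C\|u\|_{\ell^q_\nu}^{p-1}$ after taking the $r$-th root. The off-diagonal term $\mv\mapsto \tfrac1{\nu(\mv)}\sum_{\mw\sim\mv}\mu_{\mv\mw}|u(\mw)|^{p-1}$ is the more delicate one. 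Here the key observation is that this is precisely the action on $|u|^{p-1}$ of the operator $\tfrac1\nu\mathcal I'$, where $\mathcal I'$ is (a positive part of) the incidence map; by Remark~\ref{rem:bddincid}, uniform local finiteness makes $\mathcal I$ — and hence this averaging operator — bounded from $\ell^s_\mu(\mE)$ to $\ell^s_\nu(\mV)$ for every $s\in[1,\infty]$, with norm depending only on $K$. Alternatively one can argue by hand: apply Jensen's/Hölder's inequality to the finite sum over $\mw\sim\mv$ weighted by $\mu_{\mv\mw}/\mu(\mv)$ to pull the power $r$ inside (when $r\ge 1$; when $r<1$ the elementary inequality $(\sum a_i)^r\le\sum a_i^r$ is used directly), then interchange the order of summation over $\mv$ and $\mw$, using each edge twice, and finally $\sum_{\mv\sim\mw}\mu_{\mv\mw}=\mu(\mw)\le K^{-1}\nu(\mw)$ to close the estimate. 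Collecting the constants yields $C=C(K,p,q)$.

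For the last assertion, assume in addition $p\ge 2$ and $\inf_\mv\nu(\mv)>0$. Then $q/(p-1)\le q$, and under the nondegeneracy assumption \eqref{a:nondeg} we have the embedding $\ell^{q/(p-1)}_\nu(\mV)\hookrightarrow\ell^q_\nu(\mV)$ (recorded in the bullet after \eqref{e:nondegenerate}: $\ell^r_\nu\hookrightarrow\ell^s_\nu$ for $r<s$, with equality of norms when $r=s$, i.e.\ when $p=2$). Combining this embedding with the first part shows $\Lfun_p u\in\ell^q_\nu(\mV)$ whenever $u\in\ell^q_\nu(\mV)$, for every $q\in[1,\infty]$, which is the claim. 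I expect the main obstacle to be the bookkeeping in the off-diagonal term — getting the power $r=q/(p-1)$ through the inner sum cleanly in both regimes $r\ge 1$ and $r<1$ and then interchanging summations without losing track of the $K$-dependence of the constant; invoking the boundedness of $\mathcal I$ from Remark~\ref{rem:bddincid} is the cleanest way to sidestep this.
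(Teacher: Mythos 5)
Your argument is essentially the paper's own proof: a pointwise Jensen-type estimate against the normalized weights $\mu_{\mv\mw}/\mu(\mv)$, splitting $|u(\mw)-u(\mv)|$ into the two vertex contributions, interchanging the sums over $\mv$ and $\mw$, and using uniform local finiteness \eqref{a:ULF} through $\mu(\mv)\le K^{-1}\nu(\mv)$; the second assertion is, exactly as you say, the embedding $\ell^{q/(p-1)}_\nu\hookrightarrow\ell^{q}_\nu$ available under \eqref{a:nondeg} because $q/(p-1)\le q$ when $p\ge 2$. One caveat: in the regime $r=q/(p-1)<1$ your proposed substitute $(\sum_i a_i)^r\le\sum_i a_i^r$ does not close the estimate with a constant depending only on $K,p,q$ — after swapping the sums one is left with $\sum_{\mv\sim\mw}\nu(\mv)^{1-r}\mu_{\mv\mw}^{r}$, which \eqref{a:ULF} cannot control on graphs of unbounded degree (a high-degree star with a spike at one leaf shows the bound degrades with the degree) — but this is a limitation shared by the paper's own proof, whose Jensen step likewise requires $q\ge p-1$, and the lemma is only ever applied there with $q/(p-1)\ge 1$.
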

\begin{proof} Set $s:=\frac{q}{p-1}.$ We only prove for the case $q<\infty$. (The case $q=\infty$ is easy).
By Jensen's inequality,
\begin{eqnarray*}|\Lfun_p u(\mv)|^s&\leq& \left(\frac{\mu(\mv)}{\nu(\mv)}\right)^s\left(\sum_{\substack{\mw\in \mV\\ \mw \sim \mv}}\frac{\mu_{\mw\mv}}{\mu(\mv)}|u(\mw)-u(\mv)|^{p-1}\right)^s\\
&\leq& \left(\frac{\mu(\mv)}{\nu(\mv)}\right)^s\sum_{\substack{\mw\in \mV\\ \mw \sim \mv}}\frac{\mu_{\mw\mv}}{\mu(\mv)}|u(\mw)-u(\mv)|^{q}\ .\end{eqnarray*}
Hence
\begin{eqnarray*}\|\Lfun_p u\|_{\ell^s_{\nu}}^s&\leq&C\sum_{\mv\in\mV}\left(\frac{\mu(\mv)}{\nu(\mv)}\right)^{s-1}\sum_{\substack{\mw\in \mV\\ \mw \sim \mv}}\mu_{\mw\mv}(|u(\mw)|^q+|u(\mv)|^q)\\
&=&C(K,q)\sum_{\mv\in\mV}\mu(\mv)|u(\mv)|^q\leq C\sum_{\mv\in\mV} \nu(\mv)|u(\mv)|^q\ .
\end{eqnarray*}
This proves the first assertion. The second assertion is a direct consequence.\end{proof}

Let $u$ be a pointwise solution to the parabolic $p$-Laplace equation.
Then
$$D_t u(t,\mv)=\frac{1}{\nu(\mv)}\sum_{\mw\in \mV}\mu_{\mv\mw}g(u(t,\mw)-u(t,\mv))\ ,$$  where $g(\cdot):=g_p(\cdot)$ is defined in \eqref{e:gp}.
For convenience, given $\mv,\mw\in \mV,$ we set $h(t):=h_{\mv\mw}(t)=u(t,\mw)-u(t,\mv).$
Then \begin{equation}\label{e:higher order of u}\frac{d^n}{dt^n}u(t,\mv)=\frac{1}{\nu(\mv)}\sum_{\mw\in \mV}\mu_{\mv\mw}\frac{d^n}{dt^n}g(h_{\mv\mw}(t)).\end{equation}
For brevity, we denote $f^{(n)}(t)=\frac{d^n}{dt^n}f(t)$ the $n$-th derivative of a function $f.$
By the formula of high order derivatives of composite functions, one has
\begin{equation}\label{e:higher order derivatives} \frac{d^n}{dt^n}f(g(t))=\sum_{\substack{k_1,k_2,\cdots,k_n\in \Z_{\geq 0}, \\ \sum_{i=1}^nik_i=n}} \frac{n!}{k_1!k_2!\cdots k_n!}g^{(k)}(h(t)) \left(\frac{h'(t)}{1!}\right)^{k_1}\left(\frac{h''(t)}{2!}\right)^{k_2}\cdots \left(\frac{h^{(n)}(t)}{n!}\right)^{k_n},
\end{equation} where $k=\sum_{i=1}^nk_i.$

Now we are ready to prove the higher order regularity of time in the norm.

\begin{proof}[Proof of Theorem~\ref{thm:higher in norm}]
For $u_0\in \ell^q_\nu(\mV)$, by Theorem~\ref{thm:main-new} and Corollary~\ref{ralphwell-local} there exists a unique solution $u\in C([0,\infty);\ell^q_{\nu}).$
By Theorem~\ref{resuc2}, $u(\cdot, \mv)\in C^1([0,\infty))$ for any $\mv\in \mV$ and by Lemma~\ref{l:monotone} $\|u(t,\cdot)\|_{\ell^q_{\nu}}\leq \|u_0\|_{\ell^q_{\nu}}.$ Moreover, by the assumptions of $\mu$ and $\nu,$ Lemma~\ref{l:normalized operator} implies that the right hand side of the parabolic $p$-Laplace equation $\Lfun_p u\in \ell^{q}_{\nu}$ and by the interpolation inequality, \begin{eqnarray*}\left\|\frac{d}{dt}u(t,\cdot)\right\|_{\ell^q}&=&\|\Lfun_p u(t,\cdot)\|_{\ell^q_{\nu}}\leq C\|u(t,\cdot)\|_{\ell^{q(p-1)}_{\nu}}^{p-1}\\&\leq&  C\|u_0\|_{\ell^{q(p-1)}_{\nu}}^{p-1}
\leq C\|u_0\|_{\ell^{q}_{\nu}}^{p-1}
\end{eqnarray*}
which is uniform in $t\in \mathbb R_+$. This implies that $u\in C^1([0,\infty);\ell^q_{\nu}).$

For the higher order estimate, we follow the same argument as in Theorem~\ref{resuc2} with uniform norm estimates. For convenience, we set $M:=\|u_0\|_{\ell^\infty_{\nu}}$ and $W_q:=\|u_0\|_{\ell^{q}_{\nu}}$ for $q\in [1,\infty).$
By the equations \eqref{e:higher order of u} and \eqref{e:higher order derivatives}, an easy induction argument implies that for any $p\in (1,\infty)$ \begin{equation}\label{e:higher u_t infty}\left\|\frac{d^{k}}{dt^k}u(t,\cdot)\right\|_{\ell^{\infty}_{\nu}}\leq C(\|u(0,\cdot)\|_{\ell^{\infty}_{\nu}}), \ \ \ \forall\ k\leq \lfloor p\rfloor-1.\end{equation} This is even true for any $k\in\N$ if $p$ is an even integer.

We now proceed to prove the regularity in $\ell^q_\nu$-norm for any $q\in[1,\infty)$. We divide the proof into three cases:

{\bf Case 1.} $p$ is even. We want to show that $u\in C^{n}([0,\infty);\ell^q_{\nu})$ for any $n\in \N$ and $$\left\|\frac{d^{m+1}}{dt^{m+1}}u(t,\cdot)\right\|_{\ell^q_{\nu}}\leq C(p,q,m,W_q),\ \  \forall\ 0\leq m\leq n-1.$$ This is true for $n=1$ by the previous result. By the induction on $n,$ suppose this holds for $n,$ we want to show it is true for $n+1.$
By taking the $(n+1)-$th derivative of $u$ in $t$, one has
\begin{equation}\label{e:high order composite}
\frac{d^{n+1}}{dt^{n+1}}u(t,\mv)=\frac{1}{\nu(\mv)}\sum_{\mw\in \mV}\mu_{\mv\mw}\sum_{\substack{k_1,k_2,\cdots,k_n\in \Z_{\geq 0}, \\ \sum_{i=1}^nik_i=n}} C(k_1,\cdots,k_n)g^{(k)}(h_{\mv\mw}(t)) \left(h_{\mv\mw}'(t)\right)^{k_1}\cdots (h_{\mv\mw}^{(n)}(t))^{k_n},
\end{equation} where $k=\sum_{i}k_i.$
Note that for even integer $p,$ \[|g_p^{(l)}(\alpha)|=\left\{\begin{array}{ll}
C|\alpha|^{p-1-l},&0\leq l\leq  p-1, \\
0,& p\leq l, \\
\end{array} \right.\]and by \eqref{e:higher u_t infty}, $|h_{\mv\mw}^{(k)}(t)|\leq C(M)$ for any $k\leq n.$
This yields
\begin{eqnarray*}
\left|\frac{d^{n+1}}{dt^{n+1}}u(t,\mv)\right|\leq \frac{C}{\nu(\mv)}\sum_{\mw\in \mV}\mu_{\mv\mw}(|h_{\mv\mw}'(t)|+\cdots +|h_{\mv\mw}^{(n)}(t)|).
\end{eqnarray*}
Hence because $|h_{\mv\mw}^{(k)}(t)|\leq |\frac{d^k}{dt^k}u(t,\mw)|+|\frac{d^k}{dt^k}u(t,\mv)|$ one has
\begin{eqnarray*}\left\|\frac{d^{n+1}}{dt^{n+1}}u(t,\cdot)\right\|_{\ell^q_{\nu}}^q&\leq& C\sum_{k=1}^n\sum_{\mv\in \mV}\left(\frac{\mu(\mv)}{\nu(\mv)}\right)^{q-1}\sum_{\mw\in \mV}\mu_{\mv\mw}\left(\left|\frac{d^k}{dt^{k}}u(t,\mw)\right|^{(p-1)q}+\left|\frac{d^k}{dt^{k}}u(t,\mv)\right|^{(p-1)q}\right)\\
&\leq& C\sum_{k=1}^n\left\|\frac{d^k}{dt^{k}}u(t,\cdot)\right\|_{\ell^q_\nu}^q\leq C(p,q,n,M,W_q),\end{eqnarray*} where we have used
\eqref{e:higher u_t infty} for the case of even $p$ and the induction assumptions. This proves the result.

{\bf Case 2.} $p=2l+1$ for some $l\in \N.$ Set $n=2l-1.$ By the same argument as before, one can show that
\begin{equation}\label{e:estimates for q}\left\|\frac{d^{m}}{dt^{m}}u(t,\cdot)\right\|_{\ell^q_{\nu}}^q\leq C(p,q,M,W_q), \ \ \ \forall\ m\leq n+1.\end{equation} Now we want to show the Lipschitz regularity of the $(2l)$-th derivative in time of the solution with respect to the norm of $\ell^q_\nu$.
For brevity, we denote by $f(\cdot)|_{t_1}^{t_2}:=f(t_2)-f(t_1)$ the difference of the function $f$ at $t_1$ and $t_2$.
A direct calculation shows that
\begin{eqnarray*}&&
  \left|\frac{d^{n+1}}{dt^{n+1}}u(\cdot,\mv)|_{t_1}^{t_2}\right|\\&\leq&\frac{C}{\nu(\mv)}\sum_{\mw\in \mV}\mu_{\mv\mw}\sum_{\substack{k_1,k_2,\cdots,k_n\in \Z_{\geq 0}, \\ \sum_{i=1}^nik_i=n}} \left|g^{(k)}\circ h_{\mv\mw}(\cdot) \left(h_{\mv\mw}'(\cdot)\right)^{k_1}\cdots (h_{\mv\mw}^{(n)}(\cdot))^{k_n}|_{t_1}^{t_2}\right|\\
  &=&\frac{C}{\nu(\mv)}\sum_{\mw\in \mV}\mu_{\mv\mw}\sum_{\substack{k_1,k_2,\cdots,k_n\in \Z_{\geq 0}, \\ \sum_{i=1}^nik_i=n}}\left|  g^{(k)}\circ h_{\mv\mw}(\cdot)|_{t_1}^{t_2} \left(h_{\mv\mw}'(t_2)\right)^{k_1}\cdots (h_{\mv\mw}^{(n)}(t_2))^{k_n}\right. \\&&\left.+\cdots+g^{(k)}\circ h_{\mv\mw}(t_1) \left(h_{\mv\mw}'(t_1)\right)^{k_1}\cdots (h_{\mv\mw}^{(n)}(\cdot))^{k_n}|_{t_1}^{t_2}\right|\\
  &\leq& \frac{C(M)}{\nu(\mv)}\sum_{\mw\in \mV}\mu_{\mv\mw}\sum_{m=0}^n \left|h^{(m)}_{\mv\mw}(\cdot)|_{t_1}^{t_2}\right|\\
  &\leq& \frac{C(M)}{\nu(\mv)}\sum_{\mw\in \mV}\mu_{\mv\mw}\sum_{m=0}^n \left(|u^{(m)}(\cdot,\mv)|_{t_1}^{t_2}|+|u^{(m)}(\cdot,\mw)|_{t_1}^{t_2}|\right)\\
  &\leq&  \frac{C(M)}{\nu(\mv)}\sum_{\mw\in \mV}\mu_{\mv\mw}\sum_{m=0}^n \left(\int_{t_1}^{t_2}|u^{(m+1)}(\tau,\mv)|d\tau+\int_{t_1}^{t_2}|u^{(m+1)}(\tau,\mw)|d\tau\right)
\end{eqnarray*} where we have used the estimate \eqref{e:higher u_t infty}.
Now we have
	\begin{eqnarray*}
	  \left\|\frac{d^{n+1}}{dt^{n+1}}u(\cdot,\mv)|_{t_1}^{t_2}\right\|_{\ell^q_{\nu}}^q&\leq &C|t_2-t_1|^{q-1}\sum_{\mv,\mw\in \mV}\mu_{\mv\mw}\sum_{m=0}^n(\int_{t_1}^{t_2}|u^{(m+1)}(\tau,\mv)|^qd\tau+\int_{t_1}^{t_2}|u^{(m+1)}(\tau,\mw)|^qd\tau)\\
  &\leq&C|t_2-t_1|^{q-1}\sum_{m=0}^n\int_{t_1}^{t_2}\|u^{(m+1)}(\tau,\mv)\|_{\ell^q_{\nu}}^q d\tau\\
  &\leq& C(p,q,M,W_q)|t_2-t_1|^q\ ,
\end{eqnarray*} where the last inequality follows from the estimates~\eqref{e:estimates for q}.

{\bf Case 3.} $p\in (2,\infty)\setminus \N.$ This follows from the same argument as in Case 1 by noting that $|g_p^{(l)}(\alpha)|\leq C|\alpha|^{p-1-l}$ for any integer $0\leq l\leq p-1.$
\end{proof}

	\label{sec:4}

In this section we will investigate more closely the asymptotics of solutions to the $p$-Laplacian heat equation as $t\to \infty$.

\subsection{Finite extinction time}

Before proving Theorem~\ref{theo:finiteproof} we need two auxiliary results.

To begin with, by using isoperimetric inequalities we can prove a discrete analog of the usual Sobolev inequalities, thus slightly generalizing~\cite[Prop.~4.3]{Woe00}.
\begin{theo}\label{l:Sobolev}
Assume $\mG$ satisfies the \emph{$d$-isoperimetric inequality} \eqref{e:isoperimetric constant} for some $d\ge 2$.
If $\mG$ is uniformly locally finite, then there exists a constant $C=C(C_1,d,C_d)$  such that for any $p\in [1,d)$ and $f\in c_{00}(\mV)$
\begin{equation*}
\|f\|_{\ell^{p^*}_\nu(\mV)}\leq C\frac{p}{d-p}\|\mathcal I^T f\|_{\ell^p_\mu(\mE)}\ ,
\end{equation*}
where
\begin{equation}\label{eq:pstar}
p^*:=\frac{dp}{d-p}>p\
\end{equation}
and
\begin{equation*}C_1:=\inf_{\mv\in \mV} \frac{\nu(\mv)}{\mu(\mv)}>0\ .
\end{equation*}
\end{theo}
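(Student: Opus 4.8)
The plan is to follow the classical Federer--Fleming strategy adapted to graphs: one first proves the endpoint case $p=1$ (where $p^*=\frac{d}{d-1}$) by combining a discrete coarea formula with the isoperimetric inequality \eqref{e:isoperimetric constant}, and then bootstraps to all $p\in(1,d)$ by applying the $p=1$ inequality to a suitable power of $|f|$ together with Hölder's inequality. This is essentially the route of \cite[Prop.~4.3]{Woe00}; the point here is to track the weights $\mu,\nu$ and the precise $p$-dependence of the constant.

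First I would establish a \emph{discrete coarea formula}. Given $0\le g\in c_{00}(\mV)$, put $\Omega_t:=\{\mv\in\mV:g(\mv)>t\}$, a finite set for every $t>0$. For an edge $\me=\mv\mw$ with $g(\mv)\ge g(\mw)$ one has $|g(\me_+)-g(\me_-)|=g(\mv)-g(\mw)=\int_0^\infty \mathbf 1_{\{g(\mw)\le t<g(\mv)\}}\,dt=\int_0^\infty \mathbf 1_{\{\me\in\partial\Omega_t\}}\,dt$, since $\me\in\partial\Omega_t$ precisely when exactly one endpoint of $\me$ lies in $\Omega_t$. Summing against $\mu$ and applying Tonelli's theorem gives $\|\mathcal I^T g\|_{\ell^1_\mu(\mE)}=\int_0^\infty \mu(\partial\Omega_t)\,dt$. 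On the other hand $g=\int_0^\infty \mathbf 1_{\Omega_t}\,dt$, so Minkowski's integral inequality for the norm $\|\cdot\|_{\ell^{d/(d-1)}_\nu}$ (valid since $\frac{d}{d-1}\ge 1$), together with $\|\mathbf 1_{\Omega_t}\|_{\ell^{d/(d-1)}_\nu}=\nu(\Omega_t)^{\frac{d-1}{d}}$ and the isoperimetric inequality \eqref{e:isoperimetric constant}, yields
\[
\|g\|_{\ell^{d/(d-1)}_\nu(\mV)}\le \int_0^\infty \nu(\Omega_t)^{\frac{d-1}{d}}\,dt\le C_d\int_0^\infty \mu(\partial\Omega_t)\,dt=C_d\,\|\mathcal I^T g\|_{\ell^1_\mu(\mE)}.
\]
Replacing $g$ by $|g|$ and using $\|\mathcal I^T|g|\|_{\ell^1_\mu}\le\|\mathcal I^T g\|_{\ell^1_\mu}$ extends this to all $g\in c_{00}(\mV)$, which is the asserted inequality for $p=1$ (recall $\frac{p}{d-p}=\frac1{d-1}$ when $p=1$, and this factor is absorbed into the constant).

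For $p\in(1,d)$ I would set $\gamma:=\frac{(d-1)p}{d-p}$ and $p':=\frac{p}{p-1}$, and record the two elementary identities $(\gamma-1)p'=p^*$ and $\gamma-\frac{p^*}{p'}=1$. Assuming without loss of generality $f\ge 0$, apply the $p=1$ inequality to $g:=f^\gamma\in c_{00}(\mV)$; the left-hand side becomes $\|f\|_{\ell^{p^*}_\nu}^\gamma$. For the right-hand side I use the pointwise bound $|a^\gamma-b^\gamma|\le \gamma\,(a^{\gamma-1}+b^{\gamma-1})\,|a-b|$ for $a,b\ge 0$, then Hölder's inequality on $\mE$ with exponents $p$ and $p'$, then $(a^{\gamma-1}+b^{\gamma-1})^{p'}\le 2^{p'-1}(a^{(\gamma-1)p'}+b^{(\gamma-1)p'})$, and finally uniform local finiteness in the form $\mu(\mv)\le C_1^{-1}\nu(\mv)$ to pass from a sum over edges to $\sum_{\mv}\mu(\mv)f(\mv)^{(\gamma-1)p'}\le C_1^{-1}\|f\|_{\ell^{p^*}_\nu}^{p^*}$ (using $(\gamma-1)p'=p^*$). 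This produces
\[
\|f\|_{\ell^{p^*}_\nu(\mV)}^\gamma\le C_d\,\gamma\,\bigl(2^{p'-1}C_1^{-1}\bigr)^{1/p'}\|\mathcal I^T f\|_{\ell^p_\mu(\mE)}\,\|f\|_{\ell^{p^*}_\nu(\mV)}^{p^*/p'},
\]
and since $\bigl(2^{p'-1}C_1^{-1}\bigr)^{1/p'}\le 2\max\{1,C_1^{-1}\}$ and $\gamma-\frac{p^*}{p'}=1$, dividing by $\|f\|_{\ell^{p^*}_\nu}^{\gamma-1}$ (finite because $f\in c_{00}(\mV)$, and we may assume it nonzero) gives $\|f\|_{\ell^{p^*}_\nu}\le C(C_1,C_d)\,\gamma\,\|\mathcal I^T f\|_{\ell^p_\mu}$; writing $\gamma=(d-1)\frac{p}{d-p}$ and absorbing $d-1$ yields the claimed bound with prefactor $C(C_1,d,C_d)\,\frac{p}{d-p}$.

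I expect the only real work to be the exponent bookkeeping: one must verify the identities $(\gamma-1)p'=p^*$ and $\gamma-p^*/p'=1$ that make the argument close, treat the endpoint $p=1$ (where $p'=\infty$ and the Hölder step disappears) separately but consistently, and check that every constant produced along the way depends only on $C_1$, $d$ and $C_d$ --- in particular that the $p$-dependent factor $2^{p'-1}$ is neutralized by the $1/p'$ power, so that the blow-up of the Sobolev constant as $p\uparrow d$ is captured exactly by the stated factor $\frac{p}{d-p}$.
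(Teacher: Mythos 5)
Your proposal is correct and follows essentially the same route as the paper's proof: apply the $p=1$ (isoperimetric) Sobolev inequality to the power $f^{\gamma}$ with $\gamma=\frac{(d-1)p}{d-p}=\frac{p^*}{1^*}$, use a mean-value/pointwise bound, H\"older with exponents $p,p'$, and uniform local finiteness to pass from $\mu(\mv)$ to $\nu(\mv)$, with the exponent identities $(\gamma-1)p'=p^*$ and $\gamma-p^*/p'=1$ closing the argument exactly as in the paper. The only cosmetic differences are that you prove the $p=1$ endpoint via the discrete coarea formula (which the paper simply cites from Woess) and use $a^{\gamma-1}+b^{\gamma-1}$ in place of $(a\vee b)^{\gamma-1}$, neither of which changes the substance.
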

\begin{proof} For the case $p=1$, it follows from~\cite[Prop.~4.3]{Woe00} that
\[
\|f\|_{\ell^{1^*}_{\nu}(\mV)}\leq C_d\|\mathcal I^T f\|_{\ell^1_\mu(\mE)}\qquad\hbox{for all }f\in c_{00}(\mV)\ .
\]
For the proof of the theorem, we may without loss of generality assume that $f\geq 0,$ since for each $f\in \mathbb R^\mV$ and each $\me$
\[|\ \mathcal I^T|f|(\me)\ |\le |\mathcal I^Tf(\me)|.
\] For $p>1,$
set $\alpha:=\frac{p^*}{1^*}=\frac{p^*}{q}+1>1$ where $q$ is the H\"older conjugate of $p,$ i.e.\ $1/p+1/q=1.$ Note that for $p\in (1,d),$ $q\in (\frac{d}{d-1},\infty).$ For any $\me\in \mE,$ by the mean value theorem, there exists a number $\xi$ between $f(\me_{-})$ and $f(\me_{+})$ such that \begin{eqnarray*}|f^{\alpha}(\me_{+})-f^{\alpha}(\me_{-})|&=&\alpha \xi^{\alpha-1}|f(\me_{+})-f(\me_{-})|\\
&\leq& \alpha(f(\me_{-})\vee  f(\me_{+}))^{\alpha-1}|f(\me_{+})-f(\me_{-})|,\end{eqnarray*} where $f(\me_{-})\vee f(\me_{+}):=\max\{f(\me_{-}),f(\me_{+})\}.$
Then
\[
\begin{split}\|f^{\alpha}\|_{\ell^{1^*}_\nu(\mV)}&\leq C_d\|\mathcal I^T f^{\alpha}\|_{\ell^1_\mu(\mE)}\\
&=C_d\sum_{\me\in \mE}|f^{\alpha}(\me_{+})-f^{\alpha}(\me_{-})|\mu(\me)\\
&\leq C_d\alpha\sum_{\me\in \mE}(f(\me_{+})\vee f(\me_{-}))^{\alpha-1}|f(\me_{+})-f(\me_{-})|\mu(\me)\\
&\leq C_d\alpha \left[\sum_{\me\in \mE} \left(f(\me_{+})\vee f(\me_{-})\right)^{(\alpha-1)q}\mu(\me)\right]^{1/q}\left(\sum_{\me\in \mE}\left|f(\me_{+})-f(\me_{-})\right|^p\mu(\me)\right)^{1/p}\\
&\leq C_d\alpha \left(\sum_{\mv \in \mV} f^{(\alpha-1)q}(\mv)\mu(\mv)\right)^{1/q} \|\mathcal I^T f\|_{\ell^p_\mu(\mE)}
\\
&\leq C_d\alpha C_1^{-1/q} \left(\sum_{\mv \in \mV} f^{(\alpha-1)q}(\mv)\nu(\mv)\right)^{1/q} \|\mathcal I^T f\|_{\ell^p_\mu(\mE)}\\
&\leq C_d\alpha (C_1^{-\frac{d-1}{d}}\vee1)\left(\sum_{\mv \in \mV} f^{(\alpha-1)q}(\mv)\nu(\mv)\right)^{1/q} \|\mathcal I^T f\|_{\ell^p_\mu(\mE)}.
\end{split}
\]
This yields that $\|f\|_{\ell^{p^*}_\nu(\mV)}\leq  C\frac{p}{d-p}\|\mathcal I^T f\|_{\ell^p_\mu(\mE)}.$
\end{proof}

\begin{rem}
There are large families of graphs satisfying $d$-isoperimetric inequalities: E.g., any Cayley graph of a finitely generated (discrete) group with volume growth faster than $r^d$, i.e.,
\[
\mathrm{vol}(B_r):=\# \{\mv\in \mV: {\rm dist \ }(\mv,e)\le r \}\geq Cr^d\ ,\qquad r\in \mathbb R_+\ ,
\] where $e$ is the identity element of the group, satisfies a $d$-isoperimetric inequality \eqref{e:isoperimetric constant}: a canonical example is given by the additive group $\mathbb Z^d$. Furthermore, this condition is invariant under rough isometry: More precisely, if two uniformly locally finite graphs with bounded geometry (w.r.t. both $\mu$ and $\nu$) are roughly isometric and one of them satisfies \eqref{e:isoperimetric constant}, then so does the other, see~\cite[Thm.~4.7]{Woe00}. This indicates that isoperimetric inequalities impose on graphs global conditions that are stable under local perturbations.
\end{rem}

The following lemma is the key to prove the finite extinction time property, i.e.\ Theorem~\ref{theo:finiteproof}.
\begin{lemma}\label{lem:fol}
 For any $1\leq p<\frac{2d}{d+1}$ and $d>1,$ set
\[
m:=d\left(\frac2p-1\right)>1\quad\hbox{and}\quad s:=\frac{m+p-2}{p}\ .
\]
Then  for any $a>b>0,$
$$\frac{a^{m-1}-b^{m-1}}{a-b}\geq \frac{m-1}{s^p}\left(\frac{a^s-b^s}{a-b}\right)^p.$$
\end{lemma}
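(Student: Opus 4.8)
The plan is to strip the inequality down, by homogeneity, to a one--variable statement and then recognize that statement as Jensen's inequality for the convex function $u\mapsto u^{p}$.

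First I would observe that both sides are positively homogeneous of the same degree in the pair $(a,b)$: the left--hand side $\frac{a^{m-1}-b^{m-1}}{a-b}$ has degree $m-2$, while $\left(\frac{a^{s}-b^{s}}{a-b}\right)^{p}$ has degree $p(s-1)$, and the two degrees coincide because the very definition $ps=m+p-2$ yields the algebraic identity
\[
m-2=p(s-1).
\]
Dividing through by $b^{m-2}$ and writing $x:=a/b\in(1,\infty)$ then reduces the claim to
\[
\frac{x^{m-1}-1}{x-1}\ \ge\ \frac{m-1}{s^{p}}\left(\frac{x^{s}-1}{x-1}\right)^{p},\qquad x>1,
\]
where $m-1>0$ by hypothesis.

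Next I would introduce, for $x>1$, the probability measure $d\lambda_{x}:=\frac{dt}{x-1}$ on $[1,x]$ and record the elementary identity $\frac{x^{r}-1}{x-1}=r\int_{1}^{x}t^{r-1}\,d\lambda_{x}(t)$, valid for every real $r\neq0$. Both exponents occurring below are nonzero: $m-1>0$ by assumption, and $s=\frac{m+p-2}{p}>0$ since $m>1$ and $p\ge1$. Applying the identity with $r=m-1$ and with $r=s$, and then cancelling the positive factors $m-1$ and $s^{p}$, the inequality to be proved collapses to
\[
\int_{1}^{x}t^{m-2}\,d\lambda_{x}(t)\ \ge\ \left(\int_{1}^{x}t^{s-1}\,d\lambda_{x}(t)\right)^{p}.
\]

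Finally, invoking once more the identity $m-2=p(s-1)$, the integrand on the left is $(t^{s-1})^{p}$, and since $t^{s-1}>0$ on $[1,x]$ and $u\mapsto u^{p}$ is convex on $[0,\infty)$ for $p\ge1$, this is precisely Jensen's inequality against $\lambda_{x}$, which would complete the proof. I do not expect a genuine obstacle here: the whole argument rests on the integral--mean representation of $\frac{x^{r}-1}{x-1}$ together with the bookkeeping identity $m-2=p(s-1)$, after which only convexity of $u\mapsto u^{p}$ is used; the only mild points to check are that $m-1>0$ (so that the cancellation is legitimate) and that the exponents $m-2$ and $s-1$ may have either sign, which is harmless since $t\ge1>0$ throughout. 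For $p=1$ the inequality becomes an identity, consistent with equality in Jensen's inequality.
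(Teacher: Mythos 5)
Your proposal is correct and, apart from the initial (harmless but unnecessary) homogeneity normalization to $b=1$, it is essentially the paper's own argument: represent $\frac{x^{s}-1}{x-1}$ (resp. $\frac{a^{s}-b^{s}}{a-b}$) as $s$ times the mean of $t^{s-1}$ over the interval, apply Jensen's inequality for the convex map $u\mapsto u^{p}$, and use the identity $p(s-1)=m-2$ to integrate back to $\frac{a^{m-1}-b^{m-1}}{(m-1)(a-b)}$. No gaps.
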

\begin{proof}
By the Newton--Leibniz formula and the convexity of $s\mapsto s^p$ for $p\geq 1,$ one has
\begin{eqnarray*} \left(\frac{a^s-b^s}{a-b}\right)^p&=&\left(\frac{s}{a-b}\int_b^{a}\xi^{s-1}d\xi\right)^p\\
&\leq& \frac{s^p}{a-b}\int_b^{a}\xi^{(s-1)p}d\xi=\frac{s^p}{a-b}\int_b^{a}\xi^{m-2}d\xi\\
&=&\frac{s^p}{m-1}\frac{a^{m-1}-b^{m-1}}{a-b},
\end{eqnarray*}
as we wanted to prove.
\end{proof}

\begin{proof}[Proof of Theorem~\ref{theo:finiteproof}.] It suffices to consider $f_0\geq 0$ since for the solution $t\mapsto e^{-t\Lfun_p} f_0$ of the parabolic $p$-Laplace equation with initial data $f_0,$ the inequality $|e^{-t\Lfun_p} f_0|\leq e^{-t\Lfun_p}|f_0|$ holds pointwise by~\cite[Prop.~3.9]{Mug13}.
Because $1<p<\frac{2d}{d+1}$ one has $m=d(\frac{2}{p}-1)>1$. Set $s:=\frac{m+p-2}{p}$. Now, the functions $(\varphi(t,\cdot)-k)_+$ are finitely supported for any $t\geq 0$ and any $k>0$, since the solution $\varphi(t,\cdot):=e^{-t\Lfun_p}f_0\in \ell^m_\nu(\mV)$ and $\inf_{\mv\in \mV}\nu(\mv)>0,$ see Remark~\ref{rem:atinfin}.

We have for each time $t$ and each node $\mv$
 $$\frac{d}{dt}(\varphi-k)_+^m=m(\varphi-k)_+^{m-1}\frac{d}{dt}(\varphi-k)_+ = m(\varphi-k)_+^{m-1}\frac{d}{dt}\varphi\ .$$
We let $g:=(\varphi-k)_+$ and using Definition~\ref{defi-thm:mainnu} we compute
\[
\begin{split}\frac{d}{dt}\sum_{\mv \in \mV}(\varphi(t,\mv)-k)_+^m\nu(\mv)&=m\sum_{\mv \in \mV} (\varphi(t,\mv)-k)_+^{m-1}\frac{d}{dt}\varphi(t,\mv)\nu(\mv)\\
&=-m\sum_{\mv \in \mV} \Lfun_p \varphi(t,\mv) (\varphi(t,\mv)-k)_+^{m-1}\nu(\mv)\\
&=-m\sum_{\me\in \mE} |\mathcal I^T \varphi(t,\me)|^{p-2}\mathcal I^T \varphi(t,\me)\mathcal I^T((\varphi(t,\cdot)-k)_+^{m-1})(\me)\mu(\me)\\
&\leq -m\sum_{\me\in  \mE} |\mathcal I^T g(\me)|^{p-2}\mathcal I^T g(\me)\mathcal I^T (g^{m-1}(\me))\mu(\me)\\
&\leq -\frac{m(m-1)}{s^p}\sum_{\mv,\mw\in \mV}|\mathcal I^T (g^s)(\me)|^p\mu(\me)\ ,
\end{split}
\]
where we use Lemma~\ref{lem:fol} in the last inequality. Hence by Theorem~\ref{l:Sobolev},
\[
\begin{split}
\frac{d}{dt}\sum_{\mv \in \mV}(\varphi(t,\mv)-k)_+^m\nu(\mv)&\leq-\overline{C}\left(\sum_{\mv \in \mV}g^{sp^*}(\mv)\nu(\mv)\right)^{\frac{p}{p^*}}\\
&=-\overline{C}\left(\sum_{\mv \in \mV}g^{m}(\mv)\nu(\mv)\right)^{\frac{p}{p^*}}=-\overline{C}\left(\sum_{\mv \in \mV}(\varphi(t,\mv)-k)_+^{m}(\mv)\nu(\mv)\right)^{\frac{p}{p^*}},
\end{split}
\] where $p^*$ is defined in~\eqref{eq:pstar} and
\begin{equation}\label{eq:cbar}
\overline{C}:=C^p\frac{m(m-1)(d-p)^p}{(m+p-2)^p}\ ,
\end{equation}
for the constant $C=C(C_1,d,C_d)$ appearing in Theorem~\ref{l:Sobolev}.

By setting
\[
E_{m,k}(t):=\sum_{\mv \in \mV} (\varphi(t,\mv)-k)_+^m\nu(\mv)
\]
we have
\begin{equation}\label{eq:pp*}
\frac{d}{dt}E_{m,k}(t)+\bar{C} E_{m,k}(t)^{\frac{p}{p^*}}\leq 0\ .
\end{equation}

Accordingly, by integration over $t,$ we have
\[\left\{\begin{array}{ll}
E_{m,k}(t)\leq \left(E_{m,k}(0)^{p/d}-\frac{\overline{C}p}{d} t\right)^{d/p},\qquad &\hbox{for }t\in \left(0, t_{0,k}\right]\ , \\
E_{m,k}(t)=0,&\hbox{for }t\in \left[t_{0,k},\infty\right)\ ,
\end{array}\right.
\]
where
\[
t_{0,k}:=\frac{d}{\overline{C}p}E_{m,k}(0)^{p/d}\ .
\]
By passing to the limit $k\to 0$ we get $\varphi(t,\cdot)\equiv 0,$ for $t\geq t_0=\lim\limits_{k\to 0}t_{0,k}.$ This proves the theorem.
\end{proof}

\begin{rem}\label{rem:finite extinction}
\begin{enumerate}[(a)]\item
In the proof of the above theorem, the extinction time can be estimated as
$$T_0=\frac{d(m+p-2)^p}{pC^pm(m-1)(d-p)^p}\|f_0\|_{\ell^{d\frac{2-p}{p}}_{\nu}}^{2-p},$$ where $C$ is a constant depending on $d,C_1,C_d$ introduced in the statement of Theorem~\ref{l:Sobolev} and $m:=d(\frac2p-1)>1$.
\item
Throughout this article we have only considered the $p$-Laplacian for $p>1$. This is due to the fact that our analysis is based on variational methods applied to the energy functional $f\mapsto \|\mathcal I^T f\|^p_{\ell^p_\mu}$, but for $p=1$ this functional is not Fréchet differentiable and in fact its subdifferential -- the $1$-Laplacian -- is a multi-valued operator.
We stress that the finite extinction time $T_0$ does not blow up as $p\to 1+$, but rather it converges to
\[
\frac{1}{C(d-1)}\|f_0\|_{\ell^d_\nu}\ .
\]
\end{enumerate}
\end{rem}

\subsection{Conservation of mass}
To the best of our knowledge, conservation of mass for the parabolic $p$-Laplace equation~\eqref{i:eq-parabolic} on graphs has not been proved in the literature to date. Our proof closely follows the proof for the corresponding equation~\eqref{i:eq-parabolic-omega} on domains in~\cite[\S~3.3.3]{Vaz07}.

\begin{proof}[Proof of Theorem~\ref{thm:conservation of mass}]
We know from Theorem~\ref{thm:main-new} and Corollary~\ref{ralphwell-local} that the solution $\varphi$ satisfies $\varphi(t,\cdot)\in \ell^1_{\nu}$ for all $t>0$ if $f_0\in \ell^1_{\nu}$. By the assumption that $\inf_{\mv\in V}\nu(\mv)>0,$ we have  $\varphi(t,\cdot)\in \ell^{p-1}_{\nu}$ since $p\geq2$.

Let $h:\mV\to \mathbb R$ be a function with finite support that attains thereon constant value 1. Then by Theorem~\ref{thm:main-new} and Definition~\ref{defi->thm:main} one has
\begin{equation}\label{eq:conservmass}
\begin{split}
\frac{d}{dt}\sum_{\mv\in \mV}\varphi(t,\mv)h(\mv)\nu(\mv)&=-\sum_{\mv\in \mV}\Lfun_p\varphi(t,\mv)h(\mv)\nu(\mv)\\
&=-\sum_{\me\in \mE} \mu(\me)|({\mathcal I^T}\varphi)(\me)|^{p-2}({\mathcal I^T}\varphi)(\me) ({\mathcal I^T}h)(\me)\ .
\end{split}
\end{equation}

Letting $h$ converge from below to a nodewise constant function of value 1, ${\mathcal I^T}h$ converges to the edgewise constant function of value 0 with a uniform upper bound $|{\mathcal I^T}h(\me)|\leq2$, $\me\in \mE$, whereas $|({\mathcal I^T}\varphi)|^{p-2}({\mathcal I^T}\varphi)\in \ell^1_{\mu}(\mE)$ follows from $\varphi\in \ell^{p-1}_{\nu}(\mV)$ and the uniformly local finiteness. By Lebesgue's dominated convergence theorem we conclude that the terms in~\eqref{eq:conservmass} tend to 0. Consequently,
\[
\sum_{\mv\in \mV}\varphi(t,\mv)\nu(\mv)=\sum_{\mv\in \mV}\varphi(0,\mv)\nu(\mv)=\sum_{\mv\in \mV}f_0(\mv)\nu(\mv),\qquad t\ge 0\ ,
\]
i.e., conservation of mass does indeed hold.
\end{proof}

In order to prove the strict positivity of the solutions in Theorem~\ref{prop:irreducibility}, we need the following lemma, which may be of independent interest. For the case of domains, the corresponding domination property is well-known, see e.g.~\cite[\S~4]{Ouh96} and~\cite[\S~5]{Bar96} for the cases of $p=2$ and $p\in (1,\infty)$, respectively.
\begin{lemma}\label{lem:domin}
The semigroup generated by $-\Lfun^{\rm D}_p$ is dominated by the semigroup generated by $-\Lfun^{\rm N}_p$, i.e.,
\[
e^{-t\Lfun_p^{\rm D}} f\le e^{-t\Lfun_p^{\rm N}} f\qquad\hbox{ for all }0\le f\in \ell^2_\nu(\mV)\hbox{ and all }t\ge 0.
\]
\end{lemma}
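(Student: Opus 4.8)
The plan is to use the characterization of domination of nonlinear semigroups generated by subdifferentials of convex functionals, in the spirit of the Ouhabaz-type criteria quoted in the statement (cf.~\cite{Ouh96,Bar96}). Both semigroups are generated by (minus) the subdifferentials of the convex, lower semicontinuous functionals on $\ell^2_\nu(\mV)$
\[
\Efun^{\rm N}_p(f):=\frac1p\|\mathcal I^T f\|^p_{\ell^p_\mu(\mE)}\quad\hbox{with domain }w^{1,p,2}_{\mu,\nu}(\mV),
\qquad
\Efun^{\rm D}_p(f):=\frac1p\|\mathcal I^T f\|^p_{\ell^p_\mu(\mE)}\quad\hbox{with domain }\mathring{w}^{1,p,2}_{\mu,\nu}(\mV),
\]
so that $\Lfun^{\rm N}_p=\partial\Efun^{\rm N}_p$ and $\Lfun^{\rm D}_p=\partial\Efun^{\rm D}_p$, the only difference between the two being the form domain. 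The abstract domination criterion states that $e^{-t\Lfun^{\rm D}_p}\le e^{-t\Lfun^{\rm N}_p}$ on the positive cone provided that: (i) the domain $\mathring{w}^{1,p,2}_{\mu,\nu}(\mV)$ of $\Efun^{\rm D}_p$ is an ideal-compatible sublattice of the domain of $\Efun^{\rm N}_p$ — concretely, $f\in\mathring{w}^{1,p,2}_{\mu,\nu}(\mV)$ and $0\le g\le f$ pointwise with $g\in w^{1,p,2}_{\mu,\nu}(\mV)$ imply $g\in\mathring{w}^{1,p,2}_{\mu,\nu}(\mV)$, and $f\wedge g, f\vee g$ stay in the respective domains; and (ii) a compatibility inequality between the two functionals, namely
\[
\Efun^{\rm N}_p(f\vee g)+\Efun^{\rm D}_p(f\wedge g)\le \Efun^{\rm N}_p(f)+\Efun^{\rm D}_p(g)
\]
for all $f\in w^{1,p,2}_{\mu,\nu}(\mV)$, $g\in\mathring{w}^{1,p,2}_{\mu,\nu}(\mV)$.

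First I would verify the lattice/ideal properties. Since $|\mathcal I^T|f|\,(\me)|\le|\mathcal I^T f(\me)|$ edgewise, taking absolute values, positive parts, and minima/maxima does not increase the energy and preserves membership in $w^{1,p,2}_{\mu,\nu}(\mV)$; the delicate point is that these lattice operations also preserve membership in the \emph{closure} $\mathring{w}^{1,p,2}_{\mu,\nu}(\mV)$ of $c_{00}(\mV)$. For this one takes a sequence $f_n\in c_{00}(\mV)$ with $f_n\to f$ in $w^{1,p,2}_{\mu,\nu}(\mV)$; then $f_n^+\in c_{00}(\mV)$, and one shows $f_n^+\to f^+$ using that the maps $f\mapsto f^+$ on $\ell^2_\nu(\mV)$ and $u\mapsto u^+$ at the edge level (applied to $\mathcal I^T$) are continuous, together with the estimate relating $\mathcal I^T(f^+)$ to $\mathcal I^T f$. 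The same scheme handles $f\wedge g$ when $g$ is finitely supported, which is the only case we actually need because in the domination argument $g$ ranges over approximants; in fact, since the form domains coincide as sets whenever the reader's favorite condition of Remark~\ref{rem:car11} holds, but in general they do not, the argument must go through the closure explicitly.

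For step (ii), the key observation is the pointwise (edgewise) inequality for the $p$-energy density: writing $F=\mathcal I^T f$, $G=\mathcal I^T g$ and using that $\mathcal I^T(f\vee g)(\me)$ and $\mathcal I^T(f\wedge g)(\me)$ are, on each edge, obtained from $F(\me),G(\me)$ by a measure-preserving rearrangement of the increments (the classical fact that for real numbers $a,b,c,d$ one has $|a\vee c-b\vee d|^p+|a\wedge c-b\wedge d|^p\le|a-b|^p+|c-d|^p$ for $p\ge1$), summing over $\me$ with weights $\mu(\me)$ gives exactly the required inequality, with the two functionals having the \emph{same} integrand so that the split between $w^{1,p,2}_{\mu,\nu}$ and $\mathring{w}^{1,p,2}_{\mu,\nu}$ is harmless. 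The main obstacle is therefore not the energy inequality — which is elementary — but the lattice-ideal property of $\mathring{w}^{1,p,2}_{\mu,\nu}(\mV)$ inside $w^{1,p,2}_{\mu,\nu}(\mV)$, i.e.\ making precise that truncation commutes with the $c_{00}$-closure; this is where some care with the nonlinear ($p\ne2$) Sobolev-type norm is needed, though it ultimately reduces to the continuity of $f\mapsto f^+$ in that norm. Once both conditions are in place, the abstract theorem of Barthélemy~\cite{Bar96} (the nonlinear analogue of Ouhabaz's criterion) yields the asserted domination $e^{-t\Lfun_p^{\rm D}}f\le e^{-t\Lfun_p^{\rm N}}f$ for all $0\le f\in\ell^2_\nu(\mV)$ and all $t\ge0$.
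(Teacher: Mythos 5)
Your proposal follows essentially the same route as the paper: the paper's proof consists precisely of invoking Barthélemy's domination criterion \cite[Cor.~2.9]{Bar96}, which (since the two energy functionals coincide where both are defined) reduces the claim to checking that $\mathring{w}^{1,p,2}_{\mu,\nu}(\mV)$ is a positive ideal of $w^{1,p,2}_{\mu,\nu}(\mV)$ -- exactly your conditions (i)--(ii) -- and then citing \cite[Lemma~3.11]{Mug14} for that ideal property instead of verifying it by hand. Your direct verifications (the edgewise rearrangement inequality for the energies, and the approximation argument for the ideal property, where the closure step is more safely finished via weak closedness of $\mathring{w}^{1,p,2}_{\mu,\nu}(\mV)$ than via norm continuity of $f\mapsto f^+$) are consistent with what that cited lemma delivers.
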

\begin{proof}
By~\cite[Cor.~2.9]{Bar96}, it is sufficient to check that $\mathring{w}^{1,p,2}_{\mu,\nu}(\mV)$ is a \emph{positive ideal} of $w^{1,p,2}_{\mu,\nu}(\mV)$ (cf.~\cite[\S~2]{Bar96} for details). This follows from \cite[Lemma~3.11]{Mug14}.
\end{proof}

\begin{proof}[Proof of Theorem~\ref{prop:irreducibility}.] Let $\mG$ be a finite graph (without any boundary condition) and $\phi$ be the solution to the parabolic $p$-Laplace equation with nonnegative and nontrivial initial data $f_0$. Then by~\cite[Prop.~3.9]{Mug13} $\varphi(t,\mv)\geq 0$ for all $t\ge 0$ and all $\mv\in \mV$. In order to show the claim by contradiction, let us assume that $\varphi(t_0,\mv_0)= 0$ for some $t_0>0$ and $\mv_0\in \mV$. Hence
\[
\begin{split}
0&\ge \frac{d \varphi}{dt}(t_0,\mv_0)=-\Lfun_p \varphi(t_0,\mv_0)\\
&=\frac{1}{\nu(\mv)}\sum_{\substack{\mw\in\mV \\ \mw\sim\mv_0}}\mu_{\mv_0\mw}|\varphi(t_0,\mw)-\varphi(t_0,\mv_0)|^{p-2}(\varphi(t_0,\mw)-\varphi(t_0,\mv_0))\\
&=\frac{1}{\nu(\mv)}\sum_{\substack{\mw\in\mV \\ \mw\sim\mv_0}}\mu_{\mv_0\mw}|\varphi(t_0,\mw)|^{p-2}\varphi(t_0,\mw) \ .
\end{split}
\]
Since $\varphi\ge 0$, it follows that $\varphi(t_0,\mw)=0$ for all nodes $\mw$ that are adjacent to $\mv_0$. We thus find by recursion (remember that $\mG$ is always assumed to be connected!) that $\phi(t_0,\mv)=0$ for all $\mv\in \mV$, a contradiction to the conservation of mass property stated in Theorem~\ref{thm:conservation of mass}.

Let now $\mG$ be an infinite graph. By Theorem~\ref{thm:main-2} the solution to $\rm(HE^{\rm D}_p)$ can be obtained by means of a Galerkin scheme. For any finite subgraph $\Omega\subset \mV$ with Dirichlet boundary condition, the positivity of the solutions has been proved in~\cite[Thm.\ 4.3]{LeeChu12}. Applying this result to each finite-dimensional approximation $\varphi_n,$  we have $\varphi_n(t,\mv)>0$ whenever $\mv$ lies in the support of $\varphi_n(0,\cdot).$ Again by~\cite[Prop.~3.9]{Mug13} we deduce that for any $\mv\in \mV$
\[
f(t,\mv)=e^{-t\Lfun_p^{\rm D}}f_0(\mv)=\lim_{n\to \infty}e^{-t\Lfun_p^{(n)}}f_{0_n}(\mv)>0\ .
\]
This concludes the proof.
\end{proof}

\subsection{Finite measure graphs}

We can finally prove Theorem~\ref{prop:polynomial} as follows, inspired by~\cite[Thm.~11.9]{Vaz07} and~\cite[Prop.~4.6]{MugNic13}.

\begin{proof}[Proof of Theorem~\ref{prop:polynomial}]
Take a solution $\varphi$ of $\rm(HE^{\rm N}_p)$ as above, denote $\overline{\varphi}=\frac{1}{\nu(V)}\sum_{x\in V}\varphi(t,\mv)\nu(\mv)$. By Theorem~\ref{thm:conservation of mass}, $\overline{\varphi}=\overline{f_0}$ for any $t>0.$ Hence
\[
\begin{split}
\frac12 \frac{d\|\varphi(t,\cdot)-\overline{\varphi}\|_{\ell^2_\nu}^2}{dt}&=\left( \frac{d\varphi}{dt}(t,\cdot)-\frac{d}{dt}\overline{\varphi}\mid \varphi(t,\cdot)-\overline{\varphi}\right)_{\ell^2_\nu}\\
&=-\left( \Lfun_p \varphi(t,\cdot)\mid \varphi(t,\cdot)-\overline{\varphi}\right)_{\ell^2_\nu}\\
&=-\left( |\mathcal I^T \varphi|^{p-2}(t,\cdot)\mathcal I^T \varphi(t,\cdot)\mid \mathcal I^T \varphi(t,\cdot)\right)_{\ell^2_\mu(\mE)}\\
&=-\|\mathcal I^T \varphi(t,\cdot)\|^p_{\ell^p_\mu(\mE)}\\
&\leq -C\|\mathcal I^T \varphi(t,\cdot)\|_{\ell^2_{\mu}(\mE)}^p,
\end{split}
\] in the last step we use the fact that $G$ has finite total $\mu$-measure, i.e.\ $\mu(\mV)<\infty,$ which follows from \eqref{a:ULF} and $\nu(\mV)<\infty.$

By the Poincaré inequality $\rm(PI)$, $\|\mathcal I^T \varphi(t,\cdot)\|_{\ell^2_{\mu}(\mE)}\geq C\|\varphi-\overline{\varphi}\|_{\ell^2_{\nu}}.$
 Let $\alpha:=\frac{p}{2}>1$ and
\[
\psi(t):=\|\varphi(t,\cdot)-\overline{\varphi}\|^2_{\ell^2_\nu(\mV)}\ ,
\]
so that
\begin{equation}\label{mn13-b}
\psi'(t)\le -C_1 \psi(t)^\alpha,\qquad t\ge 0\ ,
\end{equation}
 and consider the function $\varphi:[0,\infty)\to \mathbb R$ defined by
$$
\varphi(t):=C_1 (\alpha-1) t-\psi(t)^{1-\alpha},\qquad t\ge 0\ .
$$
(In order to avoid trivialities we can assume that $\psi(t)>0$ for all $t>0$.)
A direct computation shows that
$$
\varphi'(t)=(\alpha-1) \left(C_1+\psi^{-\alpha}(t) \psi'(t)\right),\qquad t> 0\ ,
$$
hence we deduce by \eqref{mn13-b} that $\varphi$ is nonincreasing, i.e.,
$$
\varphi(t)\leq \varphi(0)=-\left(\|f_0-\overline{f_0}\|^{p}_{\ell^{2}_\nu(\mV)}\right)^{1-\alpha}\leq 0,\quad t>0\ .
$$
This yields that
$$
\|\varphi(t,\cdot)-\overline{\varphi}\|^{2}_{\ell^2_\nu(\mV)}\leq C t^{-\frac{1}{\alpha-1}},\quad t>0\ ,
$$
for
\[
C:=\left(C_1(\alpha-1)\right)^{-\frac{1}{\alpha-1}}\ .
\]
This concludes the proof.
\end{proof}

By the way, observe that a uniformly locally finite graph with finite measure cannot satisfy a $d$-isoperimetric inequality for any $d>1$, since in this case the volume growth $\mathrm{\nu}(B_r(x))\geq Cr^d$ would then hold.


\begin{thebibliography}{HKLW12}

\bibitem[Bam77]{Bam77}
A.\ Bamberger.
\newblock Etude d'une \'equation doublement non lin\'eaire.
\newblock {\em J.\ Funct.\ Anal.}, 24:148--155, 1977.

\bibitem[Bar96]{Bar96}
L.\ Barth{\'e}lemy.
\newblock Invariance d'un convexe ferm{\'e} par un semi-groupe associ{\'e}
  {\`a} une forme non-lin{\'e}aire.
\newblock {\em Abstr.\ Appl.\ Analysis}, 1:237--262, 1996.

\bibitem[BJ{\c{S}}13]{BerJebSer13}
C.~Bereanu, P.~Jebelean, and C.~{\c{S}}erban.
\newblock Periodic and {N}eumann problems for discrete $p(\cdot)$-{L}aplacian.
\newblock {\em J.\ Math.\ Anal.\ Appl.}, 399:75--87, 2013.

\bibitem[Br{\'e}73]{Bre73}
H.\ Br{\'e}zis.
\newblock {\em {Operateurs Maximaux Monotones et Semi-Groupes de Contractions
  dans les Espaces de Hilbert}}.
\newblock North-Holland, Amsterdam, 1973.

\bibitem[Chu97]{Chu97}
F.R.K. Chung.
\newblock {\em Spectral Graph Theory}, volume~92 of {\em Reg.\ Conf.\ Series
  Math.}
\newblock Amer.\ Math.\ Soc., Providence, RI, 1997.

\bibitem[DGV12]{DibGiaVes12}
E.~DiBenedetto, U.~Gianazza, and V.~Vespri.
\newblock {\em Harnack's inequality for degenerate and singular equations}.
\newblock Monographs in Mathematics. Springer-Verlag, New York, 2012.

\bibitem[HKLW12]{HaeKelLen12}
S.\ Haeseler, M.\ Keller, D.\ Lenz, and R.\ Wojciechowski.
\newblock {Laplacians on infinite graphs: Dirichlet and Neumann boundary
  conditions}.
\newblock {\em J.\ Spectral Theory}, 2:397--432, 2012.

\bibitem[HMMS13]{HunMacMey13}
D.~Hundertmark, L.~Machinek, M.~Meyries, and R.~Schnaubelt.
\newblock Operator semigroups and dispersive equations -- {M}anuscript of the
  $16^{\rm th}$ {I}nternet {S}eminar, 2013.
\newblock (freely available at
  \url{https://isem.math.kit.edu/images/b/b3/Isem16_final.pdf}).

\bibitem[HV81]{HerVaz81}
M.A. Herrero and J.L. Vazquez.
\newblock {Asymptotic behaviour of the solutions of a strongly nonlinear
  parabolic problem}.
\newblock {\em Ann. Fac. Sci. Toulouse}, 3:113--127, 1981.

\bibitem[Hwa12]{Hwa12}
S.\ Hwang.
\newblock {\em {H\"older regularity of solutions of generalized $p$-Laplacian
  type parabolic equations}}.
\newblock PhD thesis, Iowa State University, 2012.

\bibitem[IT13]{IanTer13}
A.~Iannizzotto and S.A. Tersian.
\newblock Multiple homoclinic solutions for the discrete $p$-{L}aplacian via
  critical point theory.
\newblock {\em J.\ Math.\ Anal.\ Appl.}, 403:173--182, 2013.

\bibitem[Jor08]{Jor08}
P.E.T. Jorgensen.
\newblock Essential self-adjointness of the graph-{L}aplacian.
\newblock {\em J.\ Math.\ Phys.}, 49:073510, 2008.

\bibitem[Kel14]{Kel14}
M.~Keller.
\newblock Intrinsic metrics on graphs -- a survey.
\newblock In D.~Mugnolo, editor, {\em Mathematical Technology of Networks
  (Proc.\ Bielefeld 2013)}, volume ??? of {\em Proc.\ Math.\ \& Stat.}, page
  ???, New York, 2014. Springer-Verlag.

\bibitem[Kir45]{Kir45}
G.~Kirchhoff.
\newblock {Ueber den Durchgang eines elektrischen Stromes durch eine Ebene,
  insbesondere durch eine kreisf\"ormige}.
\newblock {\em Ann.\ Physik}, 140:497--514, 1845.

\bibitem[KL12]{KelLen12}
M.~Keller and D.~Lenz.
\newblock Dirichlet forms and stochastic completeness of graphs and subgraphs.
\newblock {\em J.\ Reine Angew.\ Math.}, 666:189--223, 2012.

\bibitem[LC12]{LeeChu12}
Y.-S. Lee and S.-Y. Chung.
\newblock Extinction and positivity of solutions of the $p$-{L}aplacian
  evolution equation on networks.
\newblock {\em J.\ Math.\ Anal.\ Appl.}, 386:581--592, 2012.

\bibitem[MN13]{MugNic13}
D.\ Mugnolo and S.\ Nicaise.
\newblock Diffusion processes on an interval under linear moment conditions.
\newblock {\em Semigroup Forum}, 88:479--511, 2013.

\bibitem[Mug13]{Mug13}
D.~Mugnolo.
\newblock Parabolic theory of the discrete $p$-{L}aplace operator.
\newblock {\em Nonlinear Anal., Theory Methods Appl.}, 87:33--60, 2013.

\bibitem[Mug14]{Mug14}
D.~Mugnolo.
\newblock {\em {Semigroup Methods for Evolution Equations on Networks}}.
\newblock Understanding Complex Systems. Springer-Verlag, Berlin, 2014.

\bibitem[NY76]{NakYam76}
T.~Nakamura and M.~Yamasaki.
\newblock Generalized extremal length of an infinite network.
\newblock {\em Hiroshima Math.\ J.}, 6:95--111, 1976.

\bibitem[Ouh96]{Ouh96}
E.M. Ouhabaz.
\newblock Invariance of closed convex sets and domination criteria for
  semigroups.
\newblock {\em Potential Analysis}, 5:611--625, 1996.

\bibitem[Paz83]{Paz83}
A.~Pazy.
\newblock {\em Semigroups of linear operators and applications to partial
  differential equations}, volume~44 of {\em Appl.\ Math.\ Sci.}
\newblock Springer-Verlag, New York, 1983.

\bibitem[Tak91]{Tak91}
H.~Takeuchi.
\newblock Stability and {L}iouville theorems of $p$-harmonic maps.
\newblock {\em Japan.\ J.\ Math.}, 17:317--332, 1991.

\bibitem[TH10]{Tor10}
N.~Torki-Hamza.
\newblock {Essential self-adjointness for combinatorial {S}chr\"odinger
  operators I - Metrically complete graphs}.
\newblock {\em Confluentes Math.}, 2:333--350, 2010.

\bibitem[V{\'a}z07]{Vaz07}
J.L. V{\'a}zquez.
\newblock {\em {The Porous Medium Equation: Mathematical Theory}}.
\newblock Oxford Math.\ Monogr. Oxford Univ.\ Press, Oxford, 2007.

\bibitem[Woe00]{Woe00}
W.~Woess.
\newblock {\em {Random Walks on Infinite Graphs and Groups}}, volume 138 of
  {\em Cambridge Tracts Math.}
\newblock Cambridge Univ.\ Press, Cambridge, 2000.

\end{thebibliography}
\end{document}